\newcommand\SL{\operatorname{SL}}
\newcommand\SO{\operatorname{SO}}
\newcommand\PSL{\operatorname{PSL}}
\newcommand\Sp{\operatorname{Sp}}
\newcommand\edge{\textrm{edge}}
\newcommand\trc{\textrm{trc}}
\newcommand\la{{\langle}}
\newcommand\ra{{\rangle}}
\newcommand\End{\operatorname{End}}
\newcommand\Ext{\operatorname{Ext}}
\newcommand\Hom{\operatorname{Hom}}
\newcommand\Img{\operatorname{im}}
\newcommand\Gal{\operatorname{Gal}}
\newcommand\Acal{\mathcal A}
\newcommand\Scal{\mathcal S}
\newcommand\Csr{\mathscr C}
\newcommand\Bsr{\mathscr B}
\newcommand\Hsr{\mathscr H}
\newcommand\Isr{\mathscr I}
\newcommand\Ksr{\mathscr K}
\newcommand\Lsr{\mathscr L}
\newcommand\Osr{\mathscr O}
\newcommand\Psr{\mathscr P}
\newcommand\Wsr{\mathscr W}
\newcommand\Cds{\mathds{C}}
\newcommand\Fds{\mathds{F}}
\newcommand\Hds{\mathds{H}}
\newcommand\Pds{\mathds{P}}
\newcommand\Qds{\mathds{Q}}
\newcommand\Rds{\mathds{R}}
\newcommand\Zds{\mathds{Z}}
\newtheorem{theorem}{Theorem}[section]
\newtheorem{lem}[theorem]{Lemma}  %lem 表示lemma
\newtheorem{thm}[theorem]{Theorem}  %the 表示theorem
\newtheorem{cor}[theorem]{Corollary}  %cor 表示corollary
\newtheorem{prop}[theorem]{Proposition}
\theoremstyle{definition}
\newtheorem{rmk}[theorem]{Remark}
\newtheorem{eg}[theorem]{Example}
\begin{document}
\title{Monodromy and period map of the Winger Pencil}
\author{Eduard Looijenga and Yunpeng Zi}
\thanks{Part of the research for this paper was done when both authors were supported by the NSFC}
\address{Mathematisch Instituut, Universiteit Utrecht (Nederland) and Mathematics Department, University of Chicago (USA)}
\email{e.j.n.looijenga@uu.nl}
\address{Yau Mathematical Sciences Center, Tsinghua University, Beijing (China) and Yanqi Lake Beijing Institute of Mathematical Sciences and Applications, Huairou District, Beijing (China)}
\email{galoisexp@outlook.com}
\date{}

\begin{abstract}
	The sextic plane curves that are invariant under the standard action of the icosahedral group on the projective plane make up a pencil of genus ten curves  (spanned by a sum of six lines and a three times a  conic). This pencil was first considered in a note by R.~M.~Winger  in 1925 and is nowadays named after him. The second author recently gave this a modern treatment and proved among other things that it contains essentially every smooth genus  ten  curve with  icosahedral  symmetry. We here show that the Jacobian of such a curve contains  the tensor product of an elliptic curve  with a certain integral representation of the  icosahedral   group. We find that the elliptic curve  comes with  a distinguished point of order $3$, prove that the monodromy on this part of the homology  is the full congruence subgroup $\Gamma_1(3)\subset \SL_2(\Zds)$ and  subsequently  identify the base of the pencil with the associated modular curve.

	We also observe that the Winger pencil `accounts' for the deformation of the Jacobian of Bring's curve as a principal abelian fourfold with an action of the icosahedral group.

	Keywords:{Winger Pencil \and Monodromy Group \and Jacobians}

	MSC Code: {14H10 \and 14D05 \and 14H40}
\end{abstract}

\maketitle

\section{Introduction}
The Winger family is the family of genus 10 curves endowed with faithful action of icosahedral group $\Isr$ (which is isomorphic to the alternating group $\Acal_5$, see Remark \ref{rem:symb} below) introduced in \cite{winger1925invariants}.
The second author showed in \cite{zi2021geometry} that the moduli stack of genus 10 curves endowed with faithful action of the icosahedral group $\Isr$ has two connected components that are exchanged by an outer automorphism of $\Isr$. Each connected component is given by a pencil, classically known as the \emph{Winger pencil}.
That pencil has four singular fibers, one of which is a conic with multiplicity $3$ that is after a base change replaceable by a smooth genus ten curve with an automorphism group that strictly contains the copy of $\Isr$, whereas the remaining three curves, a sum of six lines, an irreducible curve with ten nodes and an irreducible curve with six nodes are all stable.

It was there also proved that if $C$ is a smooth member of this pencil, then in  the $\Cds\Isr$-module
$H_1(C;\Cds)$ only two types of irreducible representations  appear, one of which is given by the restriction to $\Isr$ of the
reflection representation of the symmetric group $\Scal_5$. This reflection representation, which we denote  by $V$, is of dimension $4$ and  appears in $H_1(C;\Cds)$ with multiplicity two.
It has a natural integral model $V_o$ (which we describe in \ref{inforVo}) for which the isogeny lattice
$\Hom_{\Zds\Isr}(V_o,H_1(C))$ is free abelian of rank two. This defines a summand $\rho_V$ of the monodromy which takes its values in the special linear group of $\Hom_{\Zds\Isr}(V_o,H_1(C))$ (so that is a copy of $\SL_2(\Zds)$). The Hodge decomposition of $H^1(C)$
determines one of $\Hom_{\Zds\Isr}(V_o,H_1(C))$ with $(1,0)$-part and $(0,1)$-part both of dimension one and thus we have associated with $C$ an elliptic curve.

The monodromy representation $\rho_V$  has the remarkable  property that it is nontrivial around  3 (of the 4) singular fibers only and is of finite order (namely $3$) near one of them.  This observation is subsumed by our main theorem below. In it appears the congruence subgroup $\Gamma_1(3)$ of integral matrices
$(\begin{smallmatrix} a & b\\ c& d  \end{smallmatrix})\in \SL_2(\Zds)$ with $c\equiv 0\pmod{3}$. This group has
index $8$ in $\SL_2(\Zds)$  and since it does not contain $-1$, its image  in $\PSL_2(\Zds)=\SL_2(\Zds)/\{\pm1\}$ has index $4$. The associated modular curve $X_1(3):=\Gamma_1(3)\backslash \Hds$
is known to have exactly one orbifold point of order three and is  completed by  two cusps, one of width $1$ and the other of width $3$.
(If $d$ is a positive integer and  $\Gamma$ a subgroup of $\SL_2(\Zds)$, then a \emph{cusp of width} $d$ of $\Gamma$ is a primitive conjugacy class of $\Gamma$ that is contained in the  $\SL_2(\Zds)$-conjugacy class  of $(\begin{smallmatrix} 1 & d\\ 0& 1  \end{smallmatrix})\in \SL_2(\Zds)$.)

% We can now state our main theorem.

\begin{thm}\label{mtm2}
	The image of $\rho_V$ is conjugate to the congruence subgroup $\Gamma_1(3)$.
	Via the above construction the period map becomes  an isomorphism of  the base of the Winger pencil onto the completion of the modular curve
	$X_1(3)$. Under this isomorphism, the point defining the triple conic goes to the orbifold point,  the point representing the irreducible curve with six nodes
	(whose normalization is Bring's  curve) to another point of $X_1(3)$, the point defining the six lines to the cusp of width $3$ and
	the irreducible curve with ten nodes  to the cusp of width $1$.
\end{thm}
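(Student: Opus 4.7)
The plan is to compute the local monodromy of $\rho_V$ around each of the four singular fibers of the Winger pencil, identify the resulting subgroup of $\SL_2(\Zds)$ with $\Gamma_1(3)$, and then deduce that the period map is a biholomorphism of the appropriate orbifold $\Pds^1$s. The dictionary between singular fibers and special points of $X_1(3)$ will fall out of this local analysis.

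I would treat the four singular fibers separately. For the triple conic, the multiplicity $3$ structure means that a cube-root base change $s \mapsto s^3$ yields a family whose central fiber is reduced and smooth, so the original monodromy is the order $3$ deck transformation; its restriction to the $V$-summand is therefore an elliptic element of order $3$ in $\SL_2(\Zds)$. For the six-line fiber and the ten-nodal fiber, each an $\Isr$-invariant stable curve, I would apply Picard--Lefschetz $\Isr$-equivariantly: the $\Isr$-orbits of nodes (of sizes $15$ and $10$ respectively) yield $\Isr$-equivariant sets of vanishing cycles, and the product of the associated transvections is the local unipotent monodromy. Projecting onto the $V$-isotypic summand and computing in the rank two integral lattice $\Hom_{\Zds\Isr}(V_o, H_1(C))$ gives a parabolic element of $\SL_2(\Zds)$; the translation length is the cusp width, expected to be $3$ at the six-lines fiber and $1$ at the ten-nodal fiber. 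For the six-nodal fiber, the six nodes form an $\Isr$-orbit with stabilizer a dihedral $D_5 \subset \Isr$, so the vanishing cycle representation is $\Cds[\Isr / D_5]$. By Frobenius reciprocity its multiplicity of $V$ equals $\dim V^{D_5}$, and since $D_5$ acts transitively on the standard $5$-element set carrying the reflection representation $V$, one sees $V^{D_5} = 0$. Hence the vanishing cycles are orthogonal to the $V$-summand under the $\Isr$-invariant intersection pairing, and $\rho_V$ is trivial around the Bring fiber.

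To assemble these data, note that the base of the pencil is $\Pds^1$, so the fundamental group of its complement with the four singular points removed is free on three generators with product $1$. Triviality of $\rho_V$ at the Bring fiber reduces $\Img(\rho_V)$ to a group on three generators with one relation: an elliptic $\tau$ of order $3$ and two parabolics $T_3, T_1$ of widths $3$ and $1$, satisfying $\tau T_3 T_1 = 1$ after a suitable ordering. This is exactly the orbifold presentation of $\Gamma_1(3)$, whose signature $(0;\, 3;\, 2)$ has one elliptic point of order $3$ and cusps of widths $3$ and $1$; hence $\Img(\rho_V)$ is conjugate to $\Gamma_1(3)$. The period map then descends to a holomorphic map of compact $\Pds^1$s $B \to X_1(3)^{\mathrm{cpt}}$; the local monodromy computation gives it local degree $1$ at each special point, so it has degree $1$ globally and is an isomorphism. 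The correspondence between singular fibers and special points of $X_1(3)$ follows at once.

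The main obstacle is expected to be the explicit identification of the widths $3$ and $1$ at the two stable nodal fibers. While the orbit sizes $15$ and $10$ are easy to enumerate, extracting the asymmetric widths $3$ versus $1$ requires a genuine integral computation using the explicit description of $V_o$ and the intersection pairing restricted to the $V$-summand: the asymmetry reflects how the sum of Picard--Lefschetz transvections sits at different depths in the integral lattice. A secondary but more routine issue is verifying that the period map is nonconstant so that the degree argument applies, which will follow as soon as nontriviality of $\rho_V$ at one singular fiber is established.
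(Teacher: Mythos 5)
Your fiber-by-fiber outline is essentially the one the paper follows: Picard--Lefschetz at the two stable nodal fibers, base change at the triple conic, triviality on the $V$-summand at the Bring fiber, and a Riemann--Hurwitz argument for the period map. The Frobenius-reciprocity shortcut for the Bring fiber---$V^{D_5}=0$, hence no $V$-part among vanishing cycles---is a clean version of what the paper states as ``the kernel is a copy of $E$,'' and your heuristic about why widths $3$ and $1$ emerge (the $V$-component of the transvection sum sits at different depths in the integral lattice) is exactly what the paper makes precise via the truncated-dodecahedron cellular model: there the generator $u_\edge$ of the $Z_\edge$-part becomes divisible by $3$ in $H_1(\Sigma)$ (Corollary~\ref{uedgemod3}), so the free basis of $\Hom_{\Zds\Isr}(V_o,H_1)$ is $\{\tfrac13 u_\edge, u_\trc\}$, and the two Picard--Lefschetz transvections become $T^{-3}$ and $ST^{-1}S^{-1}$ respectively. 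You flag this computation as the main obstacle, which is accurate.

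The genuine gap is the step identifying $\Img(\rho_V)$ with $\Gamma_1(3)$ by ``matching the orbifold presentation.'' Knowing that $\Img(\rho_V)$ is generated by an order-$3$ elliptic $\tau$ and parabolics $T_3,T_1$ of widths $3$ and $1$ satisfying $\tau T_3 T_1=1$ only exhibits $\Img(\rho_V)$ as a \emph{quotient} of $\Zds/3 * \Zds$ together with a map into $\SL_2(\Zds)$; it does not show that this quotient is injective, nor that the image is conjugate to $\Gamma_1(3)$ as opposed to some other index-$8$ subgroup (or a proper subgroup of one). The signature argument would at best pin down the $\PSL_2$-image, and even there you still need finiteness of the index and a classification step. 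The paper avoids this entirely: having the explicit matrices $\rho_\edge=T^{-3}$ and $\rho_\trc=ST^{-1}S^{-1}$, it observes these lie in $\Gamma_1(3)$ and then proves $[\SL_2(\Zds):\langle\rho_\edge,\rho_\trc\rangle]=8$ by a direct left-coset enumeration (Proposition~\ref{prop:}), forcing equality with $\Gamma_1(3)$. Your assembly step needs to be replaced by such an explicit generation argument. Relatedly, the period-map degree argument needs more care than ``local degree $1$ at each special point implies global degree $1$'': the map is \emph{totally ramified} (of degree $d$, not $1$) over each cusp, and the paper's argument first uses Riemann--Hurwitz to rule out ramification away from the cusps and then shows $\Psr_V^{-1}(o)$ is the single point $t=0$, which is what finally forces $d=1$.
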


Let us elaborate on the appearance of Bring's curve. First, recall that this is a  curve of genus four that comes endowed with a faithful
action of the full symmetric group $\Scal_5$ and is unique for that property. Its canonical embedding (in a hyperplane of  $\Pds^4$) is
given as the common zero set of the first three symmetric functions in five variables. As explained in Remark 3.6 of \cite{zi2021geometry},
it contains an $\Scal_5$-orbit of size $24$ that decomposes into two $\Isr$-orbits of size $12$, each of which further decomposes in a
$\Isr$-invariant manner into six pairs (as an $\Isr$-set this is isomorphic to the set of vertices of a regular icosahedron which indeed make up
six antipodal pairs). If we take such an $\Isr$-orbit and identify the points of each of its 6 pairs, then we get the stable genus ten curve
that appears in the Winger pencil. Since the part of the Jacobian that we consider here survives  in the normalization of  this curve
(so in Brings's curve), our period map ignores its six double points.

This can also be expressed as follows: Bring's curve has no deformations as a $\Scal_5$-curve,
not even as a $\Isr$-curve. However the associated stable genus ten curve admits a smoothing which makes the Jacobian of Bring's curve deform as a factor of the Jacobians of a stable family of genus  ten curves with $\Isr$-symmetry. Thus the Winger pencil also provides a fitting coda to a story that began with the observation  of Riera and Rodr\'{i}guez \cite{riera1992period} that despite the rigid nature of Bring's curve,  its Jacobian  deforms in a one-parameter family of principally polarized abelian varieties with $\Isr$-action. Let us also mention here that Gonz\'ales-Aguilera and Rodr\'{i}guez  \cite{gonzalez2000families} determined the Jacobian of Bring's curve as a product of four elliptic curves, all isogenous to each other, which subsequently was made more precise by Braden-Northover \cite{braden2012bring}.
\\

A central role in the proof is played by a combinatorial model of a genus ten curve with $\Isr$-action. It is obtained by taking a regular dodecahedron that is truncated in a $\Isr$-invariant manner by removing at each vertex a small triangular neighborhood (so that we get ten antipodal pairs of triangles as boundary components) and subsequently  identifying opposite boundary triangles  by means  the antipodal involution. The resulting surface is oriented and of genus ten  and  comes with a piecewise euclidean structure that is $\Isr$-invariant.  This gives rise to a $\Isr$-invariant conformal and hence complex structure. We thus obtain a family  of Riemann surfaces with $\Isr$-action depending on one real parameter  (namely the ratio of the length of an edge of a truncation and the length of an edge of the dodecahedron). We analyse what happens when this ratio tends to its infimum ($0$) or its maximum ($1$). Both represent stable degenerations  and remarkably this suffices for computing the part of the monodromy $\rho_V$.
\\

After we posted the first version of this paper,  Harry Braden drew our attention to a 1995 paper by R.~H.~Dye \cite{dye1995plane}, in which the Winger pencil appears (as the display labeled (15) on page 100). Dye, who was apparently not aware of the work of Winger, points out that that this pencil has a member whose normalization is Bring's curve.

\begin{rmk}\label{rem:symb}
	In this paper we fix an \emph{oriented} euclidean $3$-space $I_{\Rds}$ and a regular dodecahedron $D\subset I_{\Rds}$
	centered at the origin.  %Recall that the complexification $I$ of $I_{\Rds}$, regarded as a representation of $\Isr$ is irreducible.
	The group of isometries of $D$ contains the antipodal involution (denoted here by $\iota$)  which reverses orientation.
	So if $\Isr\subset \SO(I_{\Rds})$ stands for the group orientation preserving isometries of $D$, then $\Isr\times\{1, \iota\}$ is the
	full group of isometries of $D$.

	The group $\Isr$ is isomorphic to $\Acal_5$. In fact, the collection $\Ksr$ of inscribed cubes of $D$ consists of $5$ elements and $\Isr$ induces the full group of even permutations of $\Ksr$, so that
	by numbering its elements  we obtain an isomorphism $\Isr\cong\Acal_5$. But in this paper it is the group $\Isr$ rather than $\Acal_5$ which comes up naturally and as there is for us no good reason to number the elements of  $\Ksr$, we will express our results in terms of
	$\Isr$ instead of $\Acal_5$.
\end{rmk}

\section{Preliminaries}

\subsection{Isotypical decomposition of a symplectic module over a group algebra}\label{subsect:isotypical}
We  begin with a brief review of  the basic theory, referring to  \cite{isaacs1994character} for more details. Let $G$ be a finite group and  $k$ a field of  characteristic zero which is one of the following: a fixed number field, $\Rds$ or $\Cds$.  Let $\chi(kG)$ be the set of irreducible characters of $kG$. It is a set of $k$-valued class functions on $G$ whose elements are invariant under the Galois group $\Gal(\overline k/k)$.

For $\lambda\in \chi (kG)$, we denote by $V_\lambda$  an irreducible $k G$-module that represents it.
By the Schur Lemma the ring $\End_{k G}(V_{\lambda})$ is a division algebra. Denote by $D_{\lambda}$ its opposite so that we can regard
$V_{\lambda}$ as a left $kG$-module and as a right $D_{\lambda}$-module.

\emph{Assume now that $k$ is totally real.} Then each $V_{\lambda}$ admits a  positive definite $G$-invariant inner product
$s_{\lambda}:V_{\lambda}\times V_{\lambda}\to k$ and then the  division algebra $D_\lambda$ comes with a natural anti-involution $\sigma\to \sigma^{\ast}$ characterized  by $s_{\lambda}(v\sigma,v')=s_{\lambda}(v,v'\sigma^{\ast})$.

For a finitely generated $kG$-module $H$ and $\lambda\in\chi(kG)$, the right $D_{\lambda}$-module on $V_\lambda$
structure determines a left $D_{\lambda}$-module structure on $\Hom_{kG}(V_{\lambda},H)$ given as
$(du)(v):=u(vd)$ where $d\in D_{\lambda}$ and $v\in V_{\lambda}$. This makes the natural map
\begin{eqnarray*}
	\oplus_{\lambda\in\chi(kG)}V_{\lambda}\otimes_{D_{\lambda}}\Hom_{kG}(V_{\lambda},H)&\to& H\\
	v\otimes_{D_{\lambda}}u\in V_{\lambda}\otimes_{D_{\lambda}}\Hom_{kG}(V_{\lambda},H)&\mapsto& u(v)
\end{eqnarray*}
an isomorphism of $kG$-modules. This is called the \textit{isotypical decomposition} of $H$ and the image of
$V_{\lambda}\otimes_{D_{\lambda}}\Hom_{kG}(V_{\lambda},H)$ in $H$ is called  the
\emph{isotypical summand} associated to $\lambda$. Any $kG$-linear automorphism of $H$ will
preserve this $G$-isotypical decomposition and acts on each isotypical summand
$V_{\lambda}\otimes_{D_{\lambda}}\Hom_{kG}(V_{\lambda},H)$ through a $D_{\lambda}$-linear transformation on the second tensor factor.
This identifies  $\End_{kG}(H)$ with  $\Pi_{\lambda\in\chi(G)}\End_{D_{\lambda}}(\Hom_{kG}(V_{\lambda},H))$.
%And the natural map \begin{equation*}   kG\to \Pi_{\lambda\in \chi(G)}\End_{D_{\lambda}}(V_{\lambda}) \end{equation*} is an isomorphism of $k$-algebras.

Let us suppose further that our $kG$-module $H$ is endowed with a nondegenerate $G$-invariant symplectic form $(a,b)\in H\times H\mapsto \la a,b\ra\in k$ (for example when $H$ is $H_1(C,k)$ for some smooth complex-projective curve $C$ and the symplectic form being the intersection product). {The discussion above shows that  the $G$-centralizer  $\Sp(H)^G$ of $\Sp(H)$ decomposes as}
\begin{equation*}
	\Sp(H)^G=\Pi_{\lambda\in\chi(G)}\Sp(V_{\lambda}\otimes_{D_{\lambda}}\Hom_{kG}(V_{\lambda},H))^G.
\end{equation*}
For a fixed character $\lambda\in\chi(kG)$ and fixed $u,u'\in V_{\lambda}\otimes_{D_{\lambda}}\Hom_{kG}(V_{\lambda},H)$, the map $(v,v')\in V_{\lambda}\times V_{\lambda}\mapsto \la u(v),u'(v')\ra\in k$ is a $G$-invariant bilinear form on $V_{\lambda}$. Since $s_{\lambda}$ is a nondegenerate, $G$-invariant {symmetric bilinear form on $V_{\lambda}$, this implies that there exists a unique $h_{\lambda}(u,u')\in D_{\lambda}$ such that $\la u(v),u'(v')\ra=s_{\lambda}(vh_{\lambda}(u,u'),v')$ for all $v,v'\in V_{\lambda}$.} We then observe that
\begin{eqnarray*}
	h_{\lambda}(d u,u')&=&d h_{\lambda}(u,u'), \ d\in D_{\lambda}\\
	h_{\lambda}(u,u')&=&-h_{\lambda}(u',u)^{\ast}
\end{eqnarray*}
{Hence the pairing $(h,\ast)$ defines a $D_{\lambda}$-valued skew-Hermitian form on the $D_{\lambda}$-module $V_{\lambda}\otimes_{D_{\lambda}}\Hom_{kG}(V_{\lambda},H)$. We thus get an identification}
\begin{equation*}
	\Sp(V_{\lambda}\otimes_{D_{\lambda}}\Hom_{kG}(V_{\lambda},H))^G\cong {U_{D_{\lambda}}(\Hom_{kG}(V_{\lambda},H))}
\end{equation*}
where $U_{D_{\lambda}}(\Hom_{kG}(V_{\lambda},H))$ consists of the $D_\lambda$-linear automorphisms of $\Hom_{kG}(V_{\lambda},H)$ that preserve the  skew-Hermitian form above.

\subsection{Integral Representation}
Suppose $R$ is a domain and $K$ its field of fractions. Given  a finite dimensional (not necessarily commutative) $K$-algebra $A$, then an  \emph{$R$-order} in $A$ is a subalgebra $\Lambda\subset A$ that spans $A$ over $K$ and is as an $R$-submodule is finitely generated. If we are given a left $A$-module $M$ that is of finite dimension over $K$, then an \emph{$\Lambda$-lattice} in $M$ is   finitely generated torsion free $\Lambda$-submodule of $M$ that spans $M$ over $K$. Here are some examples for $G=\Isr$ that will play a special role in this paper.

\begin{eg}
	Consider the case  $R=\Zds$ (so that $K=\Qds$). Then  the integral group ring $\Zds\Isr$ is a $\Zds$-order in $\Qds\Isr$. This is the case that will concern us most.
\end{eg}

\begin{eg}\textbf{(The integral form of $V$)}\label{inforVo}
	As agreed earlier, we regard $\Isr$ as the subgroup of even  permutations of the $5$-element set $\Ksr$.
	The reflection representation $V$ of $\Isr$ is the quotient of $\Cds^\Ksr$ modulo its main diagonal. We get an integral form $V_o$ by taking $\Zds^\Ksr$ instead so that we have a short exact sequence
	\begin{equation}\label{exsqV}
		0\to {\Zds}\to \Zds^{\Ksr}\to V_o\to 0
	\end{equation}
	of $\Zds \Isr$-modules. If we endow $\Zds^\Ksr$ with the natural inner product for which the natural basis (identified with $\Ksr$) is orthonormal, then this identifies $\Zds^\Ksr$ with its  dual and the dual of this exact sequence
	\begin{equation}\label{dexsqV}
		0\to V_o^{\vee}\to \Zds^\Ksr\xrightarrow{\text{sum}} {\Zds}\to 0
	\end{equation}
	is still exact. Here $V_o^{\vee}$ is the set of vectors with coefficient sum zero. This is just the root lattice of type $A_4$ whose roots are differences of distinct basis vectors with  $\Isr$ realized  as the orientation preserving part of its Weyl group.
\end{eg}
\begin{eg}\textbf{(The integral form of $W$)}\label{inforWo}
	The irreducible representation $W$ of $\Isr$ of dimension $5$ has an integral form $W_0$ defined as
	follows. Consider the collection $\Lsr$ of pairs of opposite faces of the dodecahedron (or equivalently, the axes of the order $5$ rotations in $\Isr$). This set has $6$ elements. The group $\Isr$ acts transitively on $\Lsr$,  the stabilizer of each element  being a dihedral group of order $10$. This makes $\Zds^\Lsr$ a $\Zds\Isr$-module which contains the diagonal spanned by $\sum_{l\in \Lsr} l$ as a trivial submodule (we here identify each $l\in \Lsr$ with its characteristic function in $\Zds^\Lsr$). We define $W_o$ to be the quotient,  so that  the following sequence of $\Zds\Isr$-modules is exact
	\begin{equation}\label{exsqW}
		0\to {\Zds}\to \Zds^\Lsr\to W_o\to 0.
	\end{equation}
	We endow $\Zds^\Lsr$ with the $\Isr$-invariant symmetric bilinear form which makes $\Lsr$ an orthonormal base.
	This form identifies $\Zds^\Lsr$ with its dual as a $\Zds\Isr$-modules. So the dual of the exact sequence above is
	\begin{equation}\label{dexsqW}
		0\to W_o^{\vee}\to \Zds^{\Lsr}\xrightarrow{\text{sum}} {\Zds}\to 0,
	\end{equation}
	where $W_o^{\vee}$ is the set of vectors in $\Zds^\Lsr$  whose coefficient sum is zero. Note that $W_o^{\vee}$  is generated by differences of distinct basis vectors; these have self-product $2$ and are the roots of a root system of type $A_5$.
\end{eg}

\begin{eg}
	\textbf{(The integral form of $E$)}
	Let $I_{\Rds}$ be the ambient Euclidean vector space of $D$. We view this as a $\Rds \Isr$-module.  It is irreducible, even its  complexification $I$ is an irreducible $\Cds\Isr$-module, but $\Isr$ is not definable over $\Qds$.
	If $I'$ is obtained from $I$ by precomposing the $\Isr$-action with an outer automorphism of $\Isr$, then $E:=I\oplus I'$  is as a representation is naturally defined over $\Qds$, for a character computation shows that we can take $E_{\Qds}:=\wedge^2 V_{\Qds}$. This representation is even irreducible over $\Qds$, for the splitting requires that we pass to the extension  $\Qds(\sqrt{5})$. Indeed, $E_{\Qds(\sqrt{5})}:=E_{\Qds}\otimes_{\Qds}\Qds(\sqrt{5})$ splits into two 3-dimensional irreducible components that represent $I$ and $I'$
	and its associated division algebra is the field $\Qds(\sqrt{5})$.
	So we can take  $E_o:=\wedge^2 V_o$ as an integral form of $E$. The exact sequence \eqref{exsqV} gives a surjective map $\wedge^2\Zds^\Ksr\to \wedge^2 V_o$ whose kernel is identified with $\Zds^\Ksr\wedge (\sum_{i\in\Ksr}e_i)$, so that we have the  exact sequence of $\Zds\Isr$-modules
	\begin{equation}
		0\to V_o\to \wedge^2 \Zds^\Ksr\to E_o\to 0, \quad E_o:=\wedge^2 V_o.
	\end{equation}
\end{eg}

\begin{prop}\label{onegenr}
	The $\Zds\Isr$-modules $V_o$, $\wedge^2 \Zds^\Ksr$, $E_o=\wedge^2 V_o$, $W_o$ and $\wedge^2 W_o$ are all principal, i.e., generated over $\Zds\Isr$ by one element.
\end{prop}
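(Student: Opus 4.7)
The argument rests on two elementary observations: (a) if $X$ is a transitive $G$-set then the permutation lattice $\Zds^X$ is principal, because $\Zds G \cdot e_x = \Zds^X$ for any basis vector $e_x$; and (b) a quotient of a principal $\Zds G$-module is principal. Since $\Isr$ acts transitively on $\Ksr$ (orbit-stabilizer: the stabilizer of a cube is $\Acal_4$ of order $12$) and on $\Lsr$ (the stabilizer of a $5$-fold axis is the dihedral group of order $10$), the lattices $\Zds^\Ksr$ and $\Zds^\Lsr$ are both principal. The exact sequences \eqref{exsqV} and \eqref{exsqW} then express $V_o$ and $W_o$ as $\Zds\Isr$-quotients, so these are principal as well.

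The only substantive work is for the two exterior squares. I would claim that $e_1 \wedge e_2$ generates $\wedge^2 \Zds^\Ksr$. Its $\Isr$-orbit is the set of $e_{\sigma(1)} \wedge e_{\sigma(2)}$ for $\sigma \in \Acal_5$, which equals $\{\pm e_i \wedge e_j\}$ as $\{i,j\}$ ranges over unordered pairs in $\Ksr$; here I use that $\Acal_5$ acts transitively on the ten unordered pairs in a $5$-element set, since the stabilizer of $\{1,2\}$ in $\Acal_5$ has order six and therefore the orbit has size $60/6=10$. The $\Zds$-span of this orbit is manifestly all of $\wedge^2 \Zds^\Ksr$. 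Combined with the sequence $0 \to V_o \to \wedge^2\Zds^\Ksr \to E_o \to 0$ recorded in the preceding example, observation (b) then shows that $E_o$ is principal.

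For $\wedge^2 W_o$ the same pattern works: $\Isr$ acts transitively on ordered pairs of distinct elements of $\Lsr$, because the dihedral stabilizer of one $5$-fold axis permutes the remaining five axes transitively (they are arranged symmetrically around the chosen axis). Hence $l_1 \wedge l_2$ generates $\wedge^2 \Zds^\Lsr$ over $\Zds\Isr$, and the surjection $\Zds^\Lsr \to W_o$ induces a surjection $\wedge^2 \Zds^\Lsr \to \wedge^2 W_o$, so $\wedge^2 W_o$ is principal by (b). I do not anticipate a genuine obstacle: the entire proof amounts to organising the right chain of $\Zds\Isr$-surjections so that each module appears as a quotient of one already shown to be principal, with only elementary orbit-stabilizer verifications in between.
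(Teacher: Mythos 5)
Your proof is correct and follows essentially the same route as the paper's: observe that $\Zds^\Ksr$ and $\Zds^\Lsr$ are principal by transitivity, that $\wedge^2$ of these is generated by a single $a\wedge b$ because $\Isr$ acts transitively on ordered pairs of distinct elements (up to the sign governed by wedge antisymmetry), and that quotients of principal modules are principal. You simply spell out the orbit-stabilizer counts and the reduction chain that the paper compresses into "is clear" and "is similar."
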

\begin{proof}
	It is clear that the $\Zds\Isr$-module $V_o$ resp. $W_o$ is generated by any base element of $\Zds^\Ksr$ resp. $\Zds^\Lsr$. The $\Zds\Isr$-module $\wedge^2 \Zds^\Ksr$ is generated by any element of the form $a\wedge b$ where $a$ and $b$ are distinct elements of the standard basis of $\Zds^{\Ksr}$. The proof for $\wedge^2 V_o$ and $\wedge^2 W_o$ is similar.
\end{proof}

It was shown in \cite{zi2021geometry} that for a smooth member $C$ of the Winger pencil, its space  of holomorphic forms $H^0(C,\omega_{C})$ is as a $\Cds \Isr$-module isomorphic to $V\oplus I\oplus I'=V\oplus E$. This implies that $H^1(C;\Cds)$ is isomorphic to $V^{\oplus2}\oplus E^{\oplus 2}$. Since both $V$ and $E$ are complexifications of
irreducible $\Qds\Isr$-modules $V_\Qds$ resp. $E_\Qds$ (which are therefore self-dual), it follows that the canonical isotypical  decomposition for  $H_1(C;\Qds)$ is
\begin{equation}\label{eqn:homologydec}
	H_1(C;\Qds)\cong(V_\Qds\otimes \Hom_{\Qds\Isr}(V_\Qds,H_1(C,\Qds)))\oplus(E_\Qds\otimes \Hom_{\Qds\Isr}(E_\Qds,H_1(C, \Qds))
\end{equation}
with $\dim_\Qds \Hom_{\Qds\Isr}(V_\Qds,H_1(C,\Qds))=2$ and $\dim_{\Qds(\sqrt{5})} \Hom_{\Qds\Isr}(E_\Qds,H_1(C, \Qds))=2$.
We will here focus on the monodromy representation on the first summand.

\section{A Geometric Model of a genus ten curve with icosahedral action}\label{GMg10}
Recall that the Winger penicl is defined as a hypersurface by the following equation in the projective variety $\Pds(I)\times \Bsr\cong \Pds^2\times\Pds^1$
\begin{equation}
	g_2^3+tg_6=0
\end{equation}
Here $t\in \Bsr$ be a parameter, $g_2^3$ and $g_6$ are two generators of $\Cds[I]^{\Isr}_6$ where $g_2$ is a polynomial of degree two representing a smooth conic and $g_6$ is a polynomial of degree 6 representing the union of 6 lines.
In this following section we introduce a geometric model for a smooth fiber $C$ and describe
two stable degenerations in terms of it. We will exploit the fact that the Winger pencil comes with a natural real structure, which
in terms of our modular  interpretation is the map which replaces the given complex structure by the conjugate complex structure (so complex multiplication by  $\sqrt{-1}$ in a tangent space becomes multiplication by $-\sqrt{-1}$).  This indeed defines an anti-holomorphic automorphism of the pencil (acting on both its base and its total space and commuting with the projection). This action is also evident from the explicit form of the pencil (which has real coefficients). In particular, it takes the coordinate $t$ of the base $\Bsr$ for the Winger pencil to $\bar t$. Recall that all the singular members of the pencil appear for real values of $t$:  for $t=0$  we have a triple conic, for  $t=27/5$ an irreducible curve with $6$ nodes,  for $t=\infty$ a union of $6$ lines without triple point and  for $t=-1$ an irreducible curve with $6$ nodes.

Let $\hat{\Sigma}$ be obtained from the dodecahedron $D$ by removing in a $\Isr$-invariant manner a small regular triangle centered at each vertex of $D$ so that the faces of $\hat{\Sigma}$ are oriented solid $10$-gons (in other words it is a truncated regular dodecahedron without triangular faces).
The set of such faces has  12-elements and comes with an antipodal involution. The group permutes these faces transitively and preserves their natural orientations. The boundary of each face consists of two types of edges. We call the ones coming from the edges of $D$ \emph{$1$-cells of edge type}. They are 30 in number. They are not naturally oriented since for every such edge there is rotational symmetry of order two which reverses its orientation. But if it is given as a boundary edge of a face, then it acquires one.

We now identify opposite points on the boundary of $\hat\Sigma$ and thus obtain a {closed, combinatorial  $\Sigma$. {The antipodal involution is orientation reversing on the boundary of $\hat\Sigma$} and this makes that $\Sigma$ is oriented. Since $\Sigma$ has 12 faces, 60 edges and 30 vertices, its  Euler's characteristic is $-18$ a hence the genus is 10.  It comes endowed with an action of $\Isr$ (See Figure \ref{TruD}) which respects the cellular decomposition:} the set of $0$-cells are represented as antipodal pairs of $0$-cells of $\hat\Sigma$ and are naturally indexed by (unordered) antipodal pairs of oriented edges of $D$. The $2$-cells are of course bijectively indexed by the faces of $D$ and are canonically oriented. The $1$-cells come in two types: those that lie on edge of $D$ (hence called of \emph{edge type}) and those that come from the boundary of $\hat\Sigma$ (hence called of \emph{truncation type}).

\begin{figure}
	\centering
	{\includegraphics[width=0.4\textwidth]{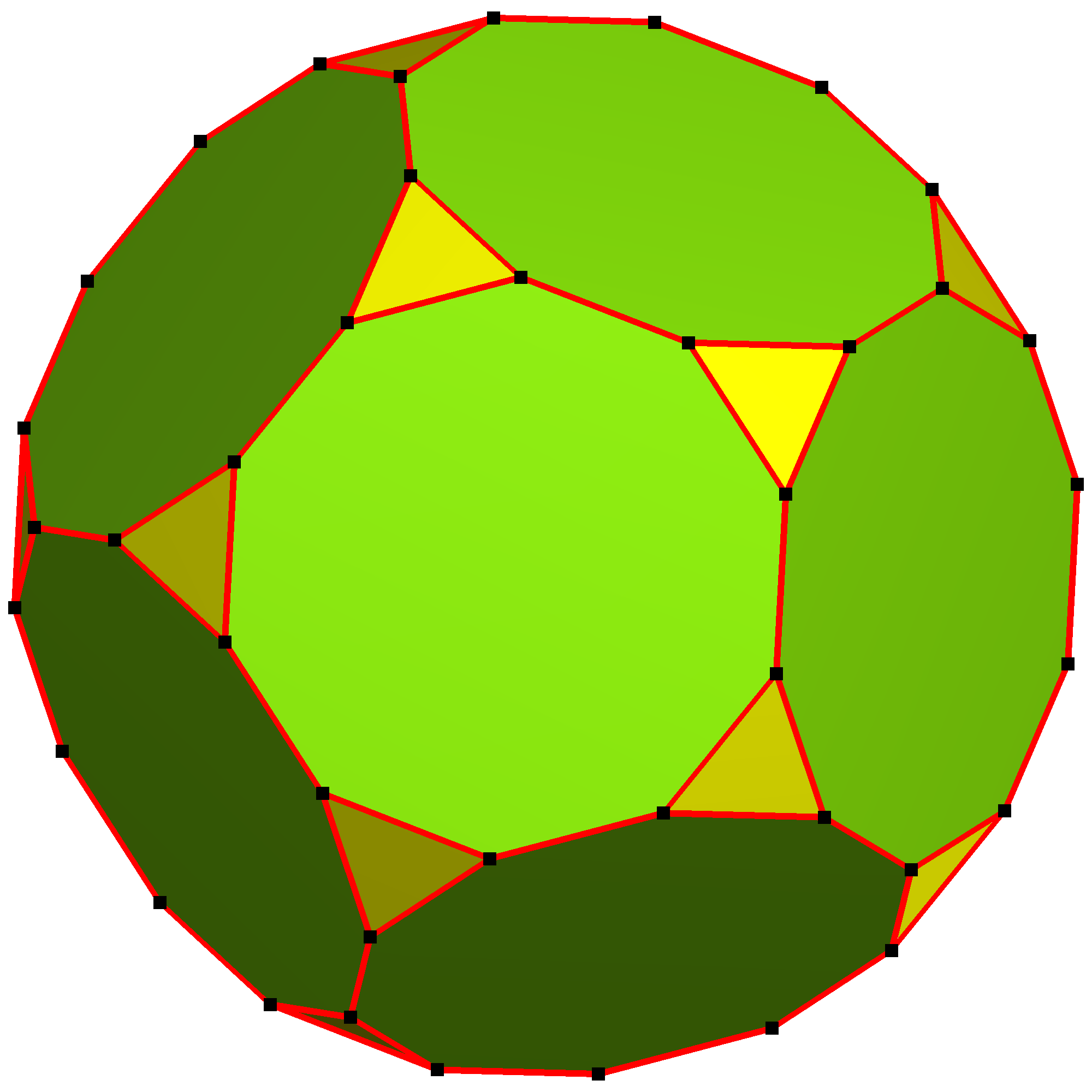}}
	\caption{\small{Removing in a $\Acal_5$-invariant manner a small regular triangle centered at each vertex of dodecahedron and identifying opposite points on the boundary.}}
	\label{TruD}
\end{figure}

The set $\Csr_\edge(\Sigma)$ of \emph{oriented} $1$-cells of $\Sigma$ of edge type is in bijective correspondence with the  the set $\Csr_1(D)$ of oriented edges of the dodecahedron $D$. This bijection is not just $\Isr$-equivariant, but also compatible with orientation reversal.
The set $\Csr_\trc(\Sigma)$ of oriented $1$-cells of $\Sigma$ of truncation type are
also bijectively indexed by $\Csr_1(D)$, but here orientation reversal is induced by the antipodal map. The following proposition is now clear.

%The finite cell complex $\Sigma$ is endowed with a faithful $\Isr$-action. Let $C_\pt(\Sigma)$ denote the associated chain complex (so $C_i(\Sigma)$ is the freely generated by oriented $i$-cells with the understanding that opposite orientations define opposite generators. Note that we have a decomposition $C_1(\Sigma)=C_\edge(\Sigma)\oplus C_\trc(\Sigma)$.

\begin{prop}\label{sporIp}
	The action of $\Isr$ on the cells of $\Sigma$ is as follows:
	\begin{enumerate}
		\item the action of $\Isr$ on the set $\Csr_0(\Sigma)$ of  $0$-cells of $\Sigma$ is transitive, each $0$-cell having a stabilizer cyclic of order $2$,
		\item the set $\Csr_1(\Sigma)$ of oriented $1$-cells of $\Sigma$  consists of two regular $\Isr$-orbits $\Csr_\trc(\Sigma)$ and $\Csr_\edge(\Sigma)$,
		\item the action of $\Isr$ on the set $\Csr_2^+(\Sigma)$ of canonically oriented $2$-cells is transitive,
		      the stabilizer each such cell being cyclic of order $5$.
	\end{enumerate}
\end{prop}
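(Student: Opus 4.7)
The plan is to reduce all three statements to standard combinatorial facts about the action of $\Isr$ on the dodecahedron $D$. Specifically, $\Isr$ acts transitively on the $20$ vertices of $D$ with stabilizer cyclic of order $3$, on the $30$ unoriented edges with stabilizer cyclic of order $2$ (the non-trivial element of which reverses the orientation of the edge it stabilizes), and on the $12$ faces with stabilizer cyclic of order $5$. Consequently $\Isr$ acts \emph{simply transitively} on the set $\Csr_1(D)$ of $60$ oriented edges of $D$. A second key input is that the antipodal involution $\iota$ commutes with every element of $\Isr$ (as recorded in Remark~\ref{rem:symb}), and in particular descends to an $\Isr$-equivariant involution on every $\Isr$-set built from $D$.

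Claim (3) is then immediate: the canonically oriented $2$-cells of $\Sigma$ correspond bijectively and $\Isr$-equivariantly to the faces of $D$, since no identification takes place in the interior of $\hat\Sigma$; hence $\Isr$ acts transitively with the order-$5$ face stabilizer. Claim (2) follows from the bijections $\Csr_\edge(\Sigma)\cong\Csr_1(D)\cong\Csr_\trc(\Sigma)$ already set up above the proposition (compatible with orientation reversal in the first case, and with orientation reversal via $\iota$ in the second): both $\Csr_\edge(\Sigma)$ and $\Csr_\trc(\Sigma)$ therefore become regular $\Isr$-sets.

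The only step requiring an argument is (1). The $60$ $0$-cells of $\hat\Sigma$ are in natural $\Isr$-equivariant bijection with $\Csr_1(D)$: a vertex of the truncation triangle near a vertex $A$ of $D$ lying on an edge $e$ at $A$ is sent to $e$ oriented away from $A$. Under this bijection the antipodal involution on the boundary of $\hat\Sigma$ corresponds precisely to $\iota$ on oriented edges. The $0$-cells of $\Sigma$ are therefore the $30$ pairs $\{e,\iota e\}$, and $\Isr$ acts on them transitively, inherited from transitivity on $\Csr_1(D)$. To compute the stabilizer of $\{e,\iota e\}$, I observe that any $g\in \Isr$ stabilizing this pair either fixes $e$ --- forcing $g=1$ by freeness of the action on oriented edges --- or sends $e$ to $\iota e$. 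In the second case simple transitivity provides a \emph{unique} such $g$, and the identity $g^{2}\cdot e = g\cdot\iota e = \iota\cdot ge = \iota^{2}e = e$, together with freeness, yields $g^{2}=1$. Hence the stabilizer is $\{1,g\}$, cyclic of order~$2$.

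The main (mild) conceptual point is that $\iota$ itself is \emph{not} in $\Isr$, so the existence of an element $g\in\Isr$ sending $e$ to $\iota e$ is not a tautology; it follows, however, directly from transitivity of $\Isr$ on $\Csr_1(D)$, because $\iota e$ is some oriented edge and is therefore in the $\Isr$-orbit of $e$. No explicit geometric identification of $g$ as a specific $180^\circ$ rotation is needed for the argument. Orbit-stabilizer then yields an orbit of size $60/2 = 30$, which matches the total number of $0$-cells of $\Sigma$ and in particular confirms the asserted transitivity.
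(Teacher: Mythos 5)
Your proof is correct and takes essentially the same route as the paper, which simply declares the proposition ``now clear'' after setting up the $\Isr$-equivariant identifications of the cells of $\Sigma$ with data on the dodecahedron $D$. You flesh out the one mildly nontrivial point (the order-$2$ stabilizer of a $0$-cell) by using simple transitivity of $\Isr$ on $\Csr_1(D)$ together with the fact that $\iota=-\mathrm{Id}$ is central, which is exactly the structure the paper's preceding paragraphs rely on.
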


An  oriented  $1$-cell of $\Sigma$ is part of a unique loop  consisting of oriented cells of the same type. Let us analyse this in some detail.

A \emph{loop of truncation type} consists of three oriented $1$-cells of that type and each  oriented $1$-cell of truncation type
appears in a unique such loop. They are bijectively indexed by the set $\Csr_0(D)$ of vertices
of $D$: every vertex $x$ is at the center of a solid triangle whose interior has been removed to form
$\Sigma$ and the boundary of this triangle with its counterclockwise orientation is a sum $\delta_x$ of
three oriented $1$-cells of truncation type. We have $\delta_{\iota x}=-\delta_x$.
We will call the closed loops constructed in this way loops \emph{of truncation type}.
We have 20 such closed
loops (10 if we ignore orientation) and the $\Isr$-action permutes them transitively. Hence the $\Isr$-stabilizer of
one such closed loop is cyclic of order $3$. We will denote this set of twenty 1-cycles by $\Delta_{\trc}$.

A \emph{loop of edge type} is the sum of oriented $1$-cell of that type plus its image under $-\iota$. These are bijectively indexed by the set $\Csr_1(D)$ with orientation reversal induced by the antipodal map. We denote this labeling  $y\in \Csr_1(D) \mapsto \delta_y$. Note that then  $\delta_{\iota y}=-\delta_y$ and $\delta_{-y}=-\delta_y$. The set of such $1$-cycles, that we shall denote  by $\Delta_{\edge}$, is an $\Isr$-orbit of 30 elements (the $\Isr$-stabilizer of one such loop is of order two).

\begin{rmk}\label{rmk:basis}
	If we remove the loops of truncation type, then the result is the interior of $\tilde\Sigma$, which is topologically a sphere with $20$ punctures. This implies that their classes $[\delta_y]\in H_1(\Sigma)$ span a  sublattice $L_\trc\subset H_1(\Sigma)$ that is Lagrangian with respect to the intersection pairing: if we select a system of representatives  $R\subset \Csr_0(D)$ for the action of the antipodal involution acting on $\Csr_0(D)$, then the ten element set $\{[\delta_x]\}_{x\in R}$ is a basis for $L_\trc$ and spans a maximal isotropic subgroup of $H_1(\Sigma)$. We can extend this to a basis of $H_1(\Sigma)$ as follows: choose for each $x\in R$ a path $\tilde\gamma_x$ on $\tilde \Sigma$ from a point $x'$ of the component of $\partial\tilde \Sigma$ which has $x$ as its center to its antipode $\iota x'$ on $\tilde\Sigma$. Then  the image $\gamma_x$ of $\tilde\gamma_x$ in $\Sigma$ is a loop whose homology class $[\gamma_x]\in H_1(\Sigma)$ has the property that $\la [\gamma_x], \delta_{x'}\ra$ is zero for $x'\in R$ unless $x'=x$ in which case it is $1$. If we let $x$ run over $R$, then the twenty 1-cycles $\delta_x$ and $\gamma_x$ map to a basis of $H_1(\Sigma)$ (which need not be symplectic).
\end{rmk}

The following is straightforward to check.

\begin{lem}\label{lemma:}
	The intersection numbers of these $1$-cycles are as follows: any two loops of the same type have intersection number zero and if $x\in \Csr_0(D)$ and $y\in \Csr_1(D)$, then $\la \delta_x, \delta_y\ra =0$ unless $x$ lies on $y$ or on $\iota y$, in which case $\la \delta_x, \delta_y)\ra\in\{\pm 1\}$ with the plus sign appearing if and only if $x$ is the end point of $y$ or the initial point of $\iota x$.
\end{lem}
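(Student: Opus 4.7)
The plan is to check the three assertions of the lemma by direct analysis of the cellular structure of $\Sigma$ introduced above.

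First I would treat pairs of loops of the same type. Two truncation loops $\delta_x$ and $\delta_{x'}$ with $\{x,\iota x\}\neq\{x',\iota x'\}$ are the boundaries of two distinct identified triangular holes in $\Sigma$, hence disjoint and of intersection zero; when $\{x,\iota x\}=\{x',\iota x'\}$, one has $\delta_{x'}=\pm\delta_x$ and the intersection equals, up to sign, the self-intersection of a simple closed curve on the oriented surface $\Sigma$, which vanishes. For two edge loops $\delta_y,\delta_{y'}$, the supports lie in the interiors of edges of $D$, which are pairwise disjoint, so after a small transverse perturbation that clears any shared $0$-cell the loops are disjoint.

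Next I would analyse the mixed case geometrically. If $y$ runs from $a$ to $b$, the antipodal identification forces the $1$-cells along $y$ and along $\iota y$ to share both their endpoints in $\Sigma$, namely the vertices $[v_a]=[v_{\iota a}]$ (on the identified hole-boundary over $\{a,\iota a\}$) and $[v_b]=[v_{\iota b}]$ (on the hole over $\{b,\iota b\}$). Thus the support of $\delta_y$ is a closed curve meeting each of these two distinct hole-circles transversally in exactly one point. Consequently $\la\delta_x,\delta_y\ra\neq 0$ iff $x$ coincides with one of $a,\iota a,b,\iota b$, i.e.\ iff $x$ lies on $y$ or on $\iota y$. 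Because antipodal vertices of $D$ are never adjacent (no edge passes through the centre), $x$ can belong to at most one of $\{a,\iota a\}$ and $\{b,\iota b\}$, so when the condition holds $\delta_x$ and $\delta_y$ meet in a single transverse point, giving $\la\delta_x,\delta_y\ra\in\{\pm 1\}$.

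Finally, for the sign I would introduce local coordinates $(u,v)$ on $\Sigma$ near the intersection, with the hole-boundary along the $u$-axis and $\delta_x$ running in the $+\partial_u$ direction (the counterclockwise orientation around $x$ viewed from outside $D$). The antipodal gluing between the two half-plane portions of $\hat\Sigma$ over $x$ and $\iota x$ is orientation-reversing on the $u$-axis, which is precisely what makes $\Sigma$ orientable with standard orientation $du\wedge dv$ in these coordinates. The $1$-cells along $y$ and $\iota y$ then emanate in the $\pm\partial_v$ directions, and the orientation of $\delta_y$ induced by that of $y$ (and characterised by $\delta_{-y}=-\delta_y$, $\delta_{\iota y}=\delta_y$) singles out one of the two. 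Checking a single case, say $x$ the terminal vertex of $y$, yields the tangent basis $(\delta_x,\delta_y)$ positively oriented, hence sign $+1$; the remaining three incidences are related to this one by replacing $y$ by $-y$ (which reverses the sign) or by $\iota y$ (which preserves it), producing exactly the stated dichotomy.

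The main obstacle is the orientation bookkeeping in the last step: one has to reconcile the conventions $\delta_{\iota y}=\delta_y$ and $\delta_{-y}=-\delta_y$ with the orientation of $\Sigma$ inherited from the orientation-reversing antipodal gluing of $\hat\Sigma$, and be sure that the local picture at the identified vertex really realises the claimed tangent directions. Once one case is verified, the symmetry arguments handle the rest, so the calculation is conceptually straightforward but requires careful labelling.
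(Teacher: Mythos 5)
The paper itself offers no proof of this lemma (it says elsewhere "The following is straightforward to check"), so there is no argument to compare against and your proposal must be judged on its own.

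Your reduction of the problem is the right one, and the two easy parts are handled correctly: same-type loops have disjoint supports (indeed one can check they do not even share a $0$-cell of $\Sigma$ when the underlying antipodal pairs differ), and in the mixed case $\delta_y$ crosses the truncation circle over $\{x,\iota x\}$ transversally in exactly one point iff $x$ lies on $y$ or on $\iota y$, giving $\pm 1$.

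The sign bookkeeping is where all the work lies, and you acknowledge you do not carry it out. Worse, the symmetry you propose to finish with is inconsistent with the lemma. You take $\delta_{\iota y}=\delta_y$ at face value (it is so stated just before the lemma) and conclude that the substitution $y\mapsto\iota y$ preserves the sign. But the sign rule in the lemma — plus precisely when $x$ is the end of $y$ or the initial point of $\iota y$ — is \emph{not} invariant under $y\mapsto\iota y$: applied to $\iota y$ it picks out the complementary pair of vertices. Concretely, from $\la\delta_b,\delta_y\ra=+1$ ($b$ = end of $y$) your rule would yield $\la\delta_b,\delta_{\iota y}\ra=+1$, whereas the lemma gives $-1$. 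The relation $\delta_{\iota y}=\delta_y$ cannot in fact hold: the chain $c_y+\iota_*c_y$ is not a cycle, since $\iota$ fixes the $0$-cells of $\Sigma$ pointwise and hence $\partial(c_y+\iota_*c_y)=2\,\partial c_y\neq 0$. The edge loop has to be $\delta_y=c_y-\iota_*c_y$, and then $\delta_{\iota y}=-\delta_y$ (consistent with the fact that $\iota$ restricted to the circle $\delta_y$ fixes two points and so reverses its orientation). Note also that the statement's "initial point of $\iota x$" is evidently a misprint for "initial point of $\iota y$"; otherwise it is nonsense, $\iota x$ being a vertex.

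With the corrected relation, the clean route from one explicit local check (say $\la\delta_x,\delta_y\ra=+1$ when $x$ is the end of $y$, verified in your $(u,v)$ picture) to the other three incidences on $\{a,\iota a,b,\iota b\}$ uses only $\delta_{\iota x}=-\delta_x$, $\delta_{-y}=-\delta_y$, and the order-two rotation in $\Isr$ about the edge $e$ (which swaps $a\leftrightarrow b$ and sends $y\mapsto -y$, and is orientation-preserving on $\Sigma$): these give $\la\delta_a,\delta_y\ra=-1$, $\la\delta_{\iota b},\delta_y\ra=-1$ and $\la\delta_{\iota a},\delta_y\ra=+1$, matching the stated dichotomy. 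You never need to move $y$ to $\iota y$ at all, and should not, given the subtlety above. So the strategy is sound, but the proposal has a genuine gap at the one place that actually needs an argument, and the symmetry reasoning offered in lieu of it would, if pressed, produce a contradiction.
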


\subsection{Degenerations of $\Sigma$}
We will describe two  degenerations of the combinatorial genus ten surface $\Sigma$ with  $\Isr$-action that have $\Delta_\trc$ resp.\ $\Delta_\edge$ as their set of vanishing cycles.

We begin with giving a one-parameter family  piecewise Euclidean structures on $\hat{\Sigma}$. For this we assume that the length of a $1$-cell of edge type  of $D$ is $\tau>0 $ and  the length of an
$1$-cell of truncation type is $1-\tau>0$. It is then clear that this determines $\hat{\Sigma}$  as a metric space, the metric being
piecewise Euclidean and invariant under  both $\Isr$ and the antipodal
involution. Such a metric defines a conformal structure $J_\tau$, a priori only defined on $\Sigma$  minus its vertices, but one that is well-known to extend across them. The given orientation makes this then a $\Isr$-invariant complex structure. Taking the opposite orientation will give us the complex conjugate complex structure $-J_\tau$. This shows that $(\Sigma,J_\tau)$ is defined over $\Rds$.

If we let $\tau$ tend to $1$, then  the length of closed loop of truncation type tends to $0$ and we get a piecewise flat metric on the singular surface $\Sigma_\trc$ that is obtained from $\Sigma$ by contracting each truncation cycle to a point.
Note that this singular surface is also got by identifying the opposite vertices of the regular dodecahedron $D$. The metric makes this a singular irreducible $\Isr$-curve with $6$ nodes, isomorphic with $C_{\frac{27}{5}}$.
Similarly, if we let $\tau$ tend to $0$, the length of closed loop of edge type tends to $0$ and we get a piecewise flat metric on the singular surface $\Sigma_\edge$ that is obtained from $\Sigma$ by contracting each edge type cycle to a point. In this case $\Sigma_\edge$  minus its singular points consists of six $5$-punctured spheres (each obtained by glueing two regular pentagons along their boundary and subsequently removing the vertices) which with the complex structure  becomes  isomorphic to $C_{\infty}$, the union of 6 lines.

We sum this up with  the following proposition.

\begin{prop}
	The Riemann surface $(\Sigma, J_\tau)$ is the set of complex points of a complex real algebraic curve. It has genus 10 and comes with a faithful $\Isr$-action, hence is isomorphic to a member of the Winger pencil.
	We thus have defined a continuous map $\gamma: [0,1]\to \Bsr$ which traverses the real interval  $[\infty, \frac{27}{5}]$
	and  which maps $(0,1)$ to $\Bsr^\circ$ (and so lands in the locus where $t$ is real and $>\frac{27}{5}$),
	such that the pull-back of the Winger pencil  yields the family constructed above. The degenerations of $\Sigma$ into $\Sigma_\edge$ resp. $\Sigma_\trc$ have $\Delta_\edge$ resp.\ $\Delta_\trc$ as their sets of vanishing cycles.
\end{prop}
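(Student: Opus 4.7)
The plan is to read off the complex structure from the piecewise Euclidean description, use the classification of \cite{zi2021geometry} to recognise $(\Sigma, J_\tau)$ as a Winger curve, and then identify the two boundary degenerations with specific singular fibres. Away from the $30$ cone points the metric is flat Riemannian, and its conformal structure together with the chosen orientation gives $J_\tau$ directly; at each $0$-cell the cone angle $\theta$ is a rational multiple of $2\pi$, and the uniformising chart $z\mapsto z^{2\pi/\theta}$ extends the complex structure across. Since $\Isr$ acts by orientation preserving isometries it acts by biholomorphisms of $(\Sigma, J_\tau)$, and the action is faithful because the induced action on $D$ is. From Proposition \ref{sporIp} the numbers of $0$-, $1$- and $2$-cells of $\Sigma$ are $30$, $60$, $12$, giving $\chi(\Sigma)=-18$ and $g(\Sigma)=10$. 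By \cite{zi2021geometry}, $(\Sigma, J_\tau)$ is isomorphic to a smooth member of the Winger pencil, and we obtain a well-defined map $\gamma:(0,1)\to\Bsr^\circ$.

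Continuity of $\gamma$ follows from the continuous dependence of the induced complex structure on a continuously varying flat conical metric (uniform convergence of Euclidean charts on compact sets away from the vertices, together with the explicit local model at each cone point). For reality of $\gamma$, the antipodal involution $\iota$ of $I_{\Rds}$ preserves the piecewise Euclidean structure, reverses orientation, and commutes with $\Isr$; it therefore induces an anti-holomorphic $\Isr$-equivariant involution of $(\Sigma, J_\tau)$, which forces $\gamma(\tau)\in\Bsr(\Rds)$ for every $\tau$.

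For the boundary behaviour I view the family as a standard plumbing degeneration. As $\tau\uparrow 1$ the loops in $\Delta_{\trc}$ each shrink to zero length, and the complex-analytic plumbing model shows that the limit is a stable nodal curve whose nodes are in bijection with these loops and whose set of vanishing cycles is precisely $\Delta_{\trc}$. The analogous analysis as $\tau\downarrow 0$ produces $\Sigma_{\edge}$ with vanishing cycles $\Delta_{\edge}$. To pin down the values $\gamma(0)$ and $\gamma(1)$ I match combinatorial and topological data against the known list of singular fibres of the pencil: $\Sigma_{\edge}$ consists of six rational components meeting transversally, which forces $\gamma(0) = \infty$, while $\Sigma_{\trc}$ is an irreducible nodal curve whose combinatorial type is realised only by $C_{27/5}$. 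Since $\Bsr(\Rds)\cong \Sds^1$ contains exactly the four singular values $\{0,-1,27/5,\infty\}$, continuity of $\gamma$ together with the fact that its image in $\Bsr^\circ$ avoids the remaining two real singular values $0$ and $-1$ forces the image of $\gamma$ to traverse the arc $[\infty,27/5]$ with $t$ real and $>27/5$ in the interior.

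The main obstacle is the matching step in the third paragraph: one has to compare the combinatorial type of the degenerate flat surfaces $\Sigma_{\trc},\Sigma_{\edge}$ (with their $\Isr$-actions and node configurations) against the known list of singular fibres of the Winger pencil, and separately check that the shrinking flat loops truly realise the standard plumbing picture, so that the shrinking cycles are genuine vanishing cycles rather than cycles collapsing in a more pathological way. Once this is in place the remaining assertions of the proposition are a formal consequence of connectedness and the real structure.
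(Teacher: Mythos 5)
Your proposal is correct and follows the same route the paper implicitly takes: the paper gives no formal proof of this proposition but derives it from the preceding discussion (piecewise Euclidean metric defines $J_\tau$, faithful $\Isr$-action, the two limits as $\tau\to 0,1$ are $\Sigma_\edge\cong C_\infty$ and $\Sigma_\trc\cong C_{27/5}$, reality via orientation reversal). You fill in the same details -- extension of the conformal structure across cone points, the Euler characteristic count $30-60+12=-18$, the anti-holomorphic $\Isr$-equivariant involution induced by $\iota$, plumbing at the two ends, and matching of combinatorial types -- so this is an elaboration of, not a departure from, the paper's argument. Two small remarks: the cone angles need not be rational multiples of $2\pi$ for the complex structure to extend (that extension is unconditional at isolated cone points), and when you match $\Sigma_\trc$ with $C_{27/5}$, note that $\Sigma_\trc$ is a rational curve with ten nodes (one per antipodal vertex pair of $D$), so its combinatorial type matches the ten-node fiber of the pencil; the paper's sentence calling $C_{27/5}$ a six-node curve in this passage is a slip, as its later statements (Theorems 1.1, 5.2) confirm.
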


We are not claiming here that $\gamma$ is a homeomorphism onto its image, although that is likely to be true (we expect the derivative of $\gamma$ to be nonzero on $(0,1)$, where it is indeed differentiable).

\subsection{Cellular Homology}\label{CellHo}
The cellular decomposition of  $\Sigma$ enables us compute its homology as that of the combinatorial chain complex
\begin{equation*}
	\xymatrix{
		0\ar[r]&C_2\ar[r]^{\partial_2} &C_1\ar[r]^{\partial_1}& C_0\ar[r]& 0.
	}
\end{equation*}
This is a complex of $\Zds\Isr$-modules with the middle term decomposing as $C_\edge\oplus C_\trc$.
In particular, $Z_i:=\ker(\partial_i)$ by $Z_i$ and  $B_{i-1}:=\Img(\partial_i)$ are also $\Zds\Isr$-modules.
Lemma \ref{sporIp} tells us what they are: let $z$ be a face of the dodecahedron $D$ (hence canonically oriented)
and let  $y\in\Csr_1(D)$ be an edge of $z$ endowed with its counterclockwise orientation. We then may note here that the group $\Isr$ acts simply  transitively on such pairs $y<z$. It is clear that $z$ determines an oriented face of $\Sigma$ and hence an element of
$C_2$. As we have seen, $y$ determines an element of $\Csr_\edge$ and an element of $\Csr_\trc$.
Then
\begin{enumerate}
	\item $C_0\cong \Zds\Isr /(h_0-1)\Zds\Isr$ with a generator represented by $y$ and $h_0\in \Isr$ sending $y$ to $\iota y$,
	\item $C_\trc\cong \Zds\Isr /(h_0+1)\Zds\Isr$ with  $y$ and $h_0$ as above,
	\item $C_\edge\cong \Zds\Isr /(h_1+1)\Zds\Isr$ with a generator represented by $y$ and $h_1\in\Isr$ sending $y$ to $-y$,
	\item  $C_2\cong \Zds\Isr /(h_2-1)\Zds\Isr$ with a generator represented by $z$ and
	      $h_2\in \Isr$ inducing a counter clockwise rotation over $2\pi/5$ in $z$
\end{enumerate}

If we apply the left exact functor $\Hom_{\Zds\Isr} (V_o, -)$ to the exact sequence
\begin{equation}\label{seqph1}
	\xymatrix{
	0\ar[r]&B_1\ar[r] &Z_1\ar[r]^-{p}& H
	_1(\Sigma)\ar[r]& 0
	}
\end{equation}
then get the exact sequence
\begin{equation}\label{eqn:bacicexseq}
	\xymatrix@C=1pc{0\ar[r]&\Hom_{ \Zds\Isr}(V_o,B_1)\ar[r]&\Hom_{ \Zds\Isr}(V_o,Z_1)\ar[r]^-{p_*}&\Hom_{ \Zds\Isr}(V_o,H_1(\Sigma))}.
\end{equation}
We will now define two elements of $\Hom_{ \Zds\Isr}(V_o,Z_1)$:  one that takes values in $Z_\trc$ and is
denoted $u_\trc$ and another taking values in $Z_\edge$ and is denoted $u_\edge$.
We will subsequently prove that they generate $\Hom_{ \Zds\Isr}(V_o,Z_1)$.

Recall that the dodecahedron $D$ has five inscribed cubes, meaning that the eight vertices of such a cube are
also vertices of the dodecahedron  and that we denoted the set of such cubes by $\Ksr$.
Every vertex of $D$ appears in exactly two inscribed cubes.

Let us fix one such a cube $e\in \Ksr$ (as in Figure \eqref{cid}). The corresponding generator of $\Zds^\Ksr$ (which we also denote by $e$) has an image in $V_o$ that we shall denote by $\bar{e}$. The set  of (eight) vertices of $e$ decomposes into two disjoint $4$-element
subsets $E$ and its antipode $\iota E$ such that  no two elements of $E$ span an edge of the cube.
Both $E$ and $\iota E$ are orbits of the $\Isr$-stabilizer of the cube.
The $\Isr$-stabilizer of $E$ is of order $12$ (it is the group of even permutations of $E$) so that $E$ has
exactly five $\Isr$-translates. We put
\[
	\delta^E_\trc:=\sum_{x\in E} \delta_x
\]
This is a $4$ term sum of elements of $\Delta_\trc$ and a $12$ term sum of elements of $\Csr_\trc$.
The $\Isr$-orbit of $\delta^E_\trc$ has $5$ elements.
Hence the sum of the elements of this orbit has 60 terms with each oriented cell of truncation type appearing and so this sum must be zero.
It follows that $\delta^E_\trc$ generates a $\Zds\Isr$-submodule of type $V_o$ and hence defines
an equivariant homomorphism $u_\trc: V_o\to Z_{\trc}$ with $u_\trc(\bar{e})=\delta^E_\trc$.

Each vertex $x$ of the dodecahedron $D$ determines $3$ oriented edges of $D$ (namely those
that have $x$ as initial point) and each such oriented edge defines an element of $\Delta_\edge$ (a loop on $\Sigma$ of edge type).  We take the sum of these three and then also sum over $E$  and denote the resulting sum of
$12$ elements of $\Delta_\edge$  by $\delta_\edge^E$, so
\[
	\delta^E_\edge:=\sum_{x\in E} \sum_{\{y\in \Csr_1(D): in(y)=x\}} \delta_y
	=\sum_{x\in E} \sum_{\{y\in \Csr_1(\Sigma): in(y)=x\}} y-\sum_{x\in \iota E} \sum_{\{y\in \Csr_1(\Sigma) : in(y)=x\}} y,
\]
where $in(y)$ stands for the initial point of the oriented edge $y$.
It is clear that the $\Isr$-orbit of $\delta_\edge^E$ has size $5$.
Hence the sum of the elements of this orbit has 120 terms with each oriented cell of edge type appearing twice
and so must be zero. Then the $\Zds\Isr$-submodule
generated by $\delta_\edge^E$ is of type $V_o$ so that we get an equivariant homomorphism
$u_{\edge}: V_o\to Z_{\edge}$  with $u_\edge(\bar{e})=\delta_\edge^E$.

The main result of this section is:

\begin{prop}\label{propcwseq}
	In the exact sequence \eqref{eqn:bacicexseq} the term $\Hom_{ \Zds\Isr}(V_o,B_1)$ is trivial, $\Hom_{ \Zds\Isr}(V_o,Z_1)$ is the free abelian group generated by $u_\trc$ and $u_\edge$  and the cokernel of $p_*$ is cyclic of order $3$, to be precise, the image of $u_\edge$ in $\Hom_{ \Zds\Isr}(V_o,H_1(\Sigma))$ is divisible by $3$,  so that
	$\Hom_{ \Zds\Isr}(V_o,H_1(\Sigma,\Zds))$ is the free abelian group generated by the images $u_\trc$ and $\frac{1}{3}u_\edge$.
\end{prop}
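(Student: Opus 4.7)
My plan is to apply $\Hom_{\Zds\Isr}(V_o, -)$ to the cellular chain complex of $\Sigma$ from Section \ref{CellHo} and chase the associated long exact sequences. First I show $\Hom_{\Zds\Isr}(V_o, B_1)=0$: since $C_2 \cong \mathrm{Ind}_{\la h_2\ra}^\Isr \mathbf{1}$ with $h_2$ of order $5$, and $h_2$ acts on $V$ with eigenvalues the four primitive fifth roots of unity, Frobenius reciprocity gives $\Hom_{\Qds\Isr}(V_\Qds, C_2\otimes\Qds)=0$ and hence $\Hom_{\Qds\Isr}(V_\Qds, B_1\otimes\Qds)=0$; since $B_1\subset C_1$ is torsion-free, this vanishing lifts integrally.

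Next I would compute $\Hom_{\Zds\Isr}(V_o, Z_\trc)$ and $\Hom_{\Zds\Isr}(V_o, Z_\edge)$ separately. The truncation $1$-subcomplex of $\Sigma$ is a disjoint union of ten triangles, so $Z_\trc \cong \mathrm{Ind}_{\Scal_3}^\Isr \epsilon$ where $\Scal_3\subset\Isr$ is the stabilizer of an antipodal vertex pair $\{x,\iota x\}\in\Csr_0(D)$ and $\epsilon$ is its sign character; Frobenius reciprocity yields rank one, and primitivity of $u_\trc$ holds because $\delta^E_\trc$ has coefficients $\pm 1$ on its twelve support cells with no cancellation. An analogous argument with the edge-type $1$-subcomplex (a disjoint union of fifteen bigons with a Klein-four stabilizer) gives $\Hom_{\Zds\Isr}(V_o, Z_\edge) = \Zds u_\edge$. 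To conclude $\Hom_{\Zds\Isr}(V_o, Z_1) = \Zds u_\trc \oplus \Zds u_\edge$: both sides have rank $2$ (the left side since, by the known decomposition of $H_1(\Sigma;\Cds)$ and Step $1$, $V_\Qds$ appears in $Z_1\otimes\Qds$ with multiplicity two), and if $(\alpha,\beta)\in\Hom(V_o,C_\trc)\oplus\Hom(V_o,C_\edge)$ represents an element of $\Hom(V_o,Z_1)$, then $\partial_1^\trc\alpha=-\partial_1^\edge\beta$ sits in $\Img(\partial_1^{\trc,*})\cap\Img(\partial_1^{\edge,*})\subset\Hom(V_o,C_0)$, which vanishes by a rank count over $\Qds$, forcing $\alpha\in\Zds u_\trc$ and $\beta\in\Zds u_\edge$.

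For the divisibility-by-$3$ assertion I first establish the cohomological upper bound. Combining the SES $0\to\Zds\to C_2\to B_1\to 0$ (where $\Zds = Z_2 = H_2(\Sigma)$) with the short resolution $0\to\Zds\to\Zds^\Ksr\to V_o\to 0$ and Shapiro's lemma, one obtains
\[
\Ext^1_{\Zds\Isr}(V_o, B_1)\;\hookrightarrow\;\Ext^2_{\Zds\Isr}(V_o,\Zds)\;\cong\;H^2(\Isr_e,\Zds)\;=\;\Zds/3,
\]
using $\Isr_e\cong\Acal_4$ and $\Acal_4^{\mathrm{ab}}=\Zds/3$ (the Schur multiplier contribution vanishes since $\Hom(\Zds/2,\Zds)=0$). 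Since the cokernel of $p_*$ embeds in $\Ext^1(V_o,B_1)$ via the connecting homomorphism from $0\to B_1\to Z_1\to H_1\to 0$, it is contained in $\Zds/3$. To show equality I would exhibit an explicit class $\xi\in\Hom_{\Zds\Isr}(V_o,H_1(\Sigma))$ with $3\xi=p_*(u_\edge)$ by producing a rational $2$-chain $c\in C_2\otimes\Qds$ and integer cycle $z\in Z_1$ satisfying $\partial c=\delta^E_\edge-3z$; the factor $3$ arises via averaging against the order-$3$ rotation subgroup stabilizing each vertex of $E$. Together with $p_*(u_\trc)$, the class $\xi$ then generates $\Hom_{\Zds\Isr}(V_o,H_1(\Sigma))$.

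The main obstacle is this explicit construction of $c$ and $z$. The cohomological bound pinpoints the modulus $3$ but does not itself furnish the divisor; one needs a combinatorial argument exploiting the $\Isr_E=\Acal_4$-symmetry of the tetrahedron $E\subset D$ and, specifically, the cyclic $\Zds/3$ quotient of $\Acal_4$ realized by order-$3$ rotations around each vertex of $E$ permuting its three incident pentagonal faces.
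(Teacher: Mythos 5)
Your proof of the first two assertions is essentially correct and close in spirit to the paper's, though more explicit. For the vanishing of $\Hom_{\Zds\Isr}(V_o,B_1)$ you and the paper both reduce to the non-occurrence of $V$ in $C_2\otimes\Cds$; your Frobenius-reciprocity phrasing and the paper's character table are the same computation. For the generation of $\Hom_{\Zds\Isr}(V_o,Z_1)$ by $u_\trc$ and $u_\edge$ your decomposition into triangles and bigons is sound, but the claim that $\Img(\partial_1^{\trc,*})\cap\Img(\partial_1^{\edge,*})$ ``vanishes by a rank count'' is loose as stated: the two images each have $\Qds$-dimension $1$ inside the $2$-dimensional $\Hom_{\Qds\Isr}(V_\Qds,C_0\otimes\Qds)$, so a naive count only gives intersection dimension $\geq 0$. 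The vanishing does hold, but it requires feeding back the fact that $\dim_\Qds\Hom(V_\Qds,Z_1\otimes\Qds)=2$ (which you do state): any nonzero element of the intersection would produce a cycle in $Z_1\otimes\Qds$ not in $(Z_\trc\oplus Z_\edge)\otimes\Qds$, forcing that dimension to be $\geq 3$. Make this explicit.

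The real divergence --- and the real gap --- is the divisibility-by-$3$ statement. Your cohomological upper bound is a genuinely different route from the paper's, and it is salvageable: the claimed injection $\Ext^1_{\Zds\Isr}(V_o,B_1)\hookrightarrow\Ext^2_{\Zds\Isr}(V_o,\Zds)$ needs justification (one must show $\Ext^1(V_o,C_2)\to\Ext^1(V_o,B_1)$ has zero image), but this does work: using $\Hom(V_o,C_2)=\Hom(V_o,B_1)=0$, the long exact sequence gives an injection $\Ext^1(V_o,\Zds)\hookrightarrow\Ext^1(V_o,C_2)$ between two groups both isomorphic to $\Zds/5$, hence an isomorphism, so the next map is zero. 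Your $H^2(\Acal_4,\Zds)\cong\Zds/3$ is correct. However, as you yourself acknowledge, this only caps the cokernel at $\Zds/3$; it does not show $u_\edge$ is actually divisible by $3$, and your proposed construction of the rational $2$-chain $c$ with $\partial c=\delta^E_\edge-3z$ (where you further need $c$ to be \emph{integral}, not merely rational, for the homological conclusion) is not supplied, nor is the ``averaging against the order-$3$ rotation'' mechanism the one that does the job. The paper's argument is elementary and self-contained and entirely replaces your upper bound: Lemma \ref{lem:mod3} observes that the corresponding $1$-chain $\tilde\delta^E_\edge$ on the $2$-sphere $D$ has boundary $3(\iota E - E)$, hence is a cycle mod $3$ and therefore a boundary mod $3$ since $H_1(D;\Zds/3)=0$ (with the refinement that one can kill the coefficients of a chosen antipodal face pair); then Corollary \ref{uedgemod3} transfers this to $\Sigma$ by computing intersection numbers of $u_\edge(\bar e)$ against the explicit basis $\{[\delta_x],[\gamma_x]\}_{x\in R}$ of Remark \ref{rmk:basis} and invoking unimodularity of the intersection pairing, and Lemma \ref{speint} simultaneously bounds the divisibility above. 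You should either supply the explicit integral $2$-chain your approach requires, or replace this step with the boundary-mod-$3$ argument.
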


\begin{rmk}
	We can phrase this in terms of a topological Jacobians as follows. By definition the topological Jacobian $J(\Sigma)$ of $\Sigma$ is the real torus $H_1(\Sigma,\Rds/\Zds)$. The name is justified, because as is well known, a complex structure on $\Sigma$ puts such a structure on $J(\Sigma)$ for which it then
	becomes the  Jacobian of the resulting Riemann surface. The above proposition suggests that we consider the homomorphism of real 2-dimensional tori
	\[
		\tilde J(V_o, \Sigma):=\Hom_{ \Zds\Isr}(V_o, Z_1\otimes \Rds/\Zds)\to \Hom_{ \Zds\Isr}(V_o,H_1(\Sigma,\Rds/\Zds))=: J(V_o, \Sigma).
	\]
	Proposition \ref{propcwseq} tells us that this is a covering of degree $3$ whose kernel is generated by the image   of
	$\frac{1}{3}u_\edge$ in $\tilde J(V_o, \Sigma)$ (a point of order $3$). We shall see that the natural homomorphism    $V_o\otimes \Hom_{ \Zds\Isr}(V_o,H_1(\Sigma))\to H_1(\Sigma)$ has torsion free cokernel. This implies that the natural map of real  tori $V_o\otimes J(V_o, \Sigma)\to J(\Sigma)$ is injective with image  the torus
	defined by the reflection representation $V$. A $\Isr$-invariant complex structure on $\Sigma$ turns into $J(\Sigma)$ a  Jacobian on which
	$\Isr$-acts. Thus $V_o\otimes J(V_o, \Sigma)$ inherits a $\Isr$-invariant complex structure, so that we also get a complex structure on $J(V_o, \Sigma)$, making it an elliptic curve. Since  $\tilde J(V_o, \Sigma)\to J(V_o, \Sigma)$ is a covering of degree $3$, this makes  $\tilde J(V_o, \Sigma)$ an an elliptic curve that comes with a point of order $3$.
	%If we divide by the subgroup  generated by that point and then tensor with
\end{rmk}

\begin{figure*}
	\centering
	{\includegraphics[width=0.48\textwidth]{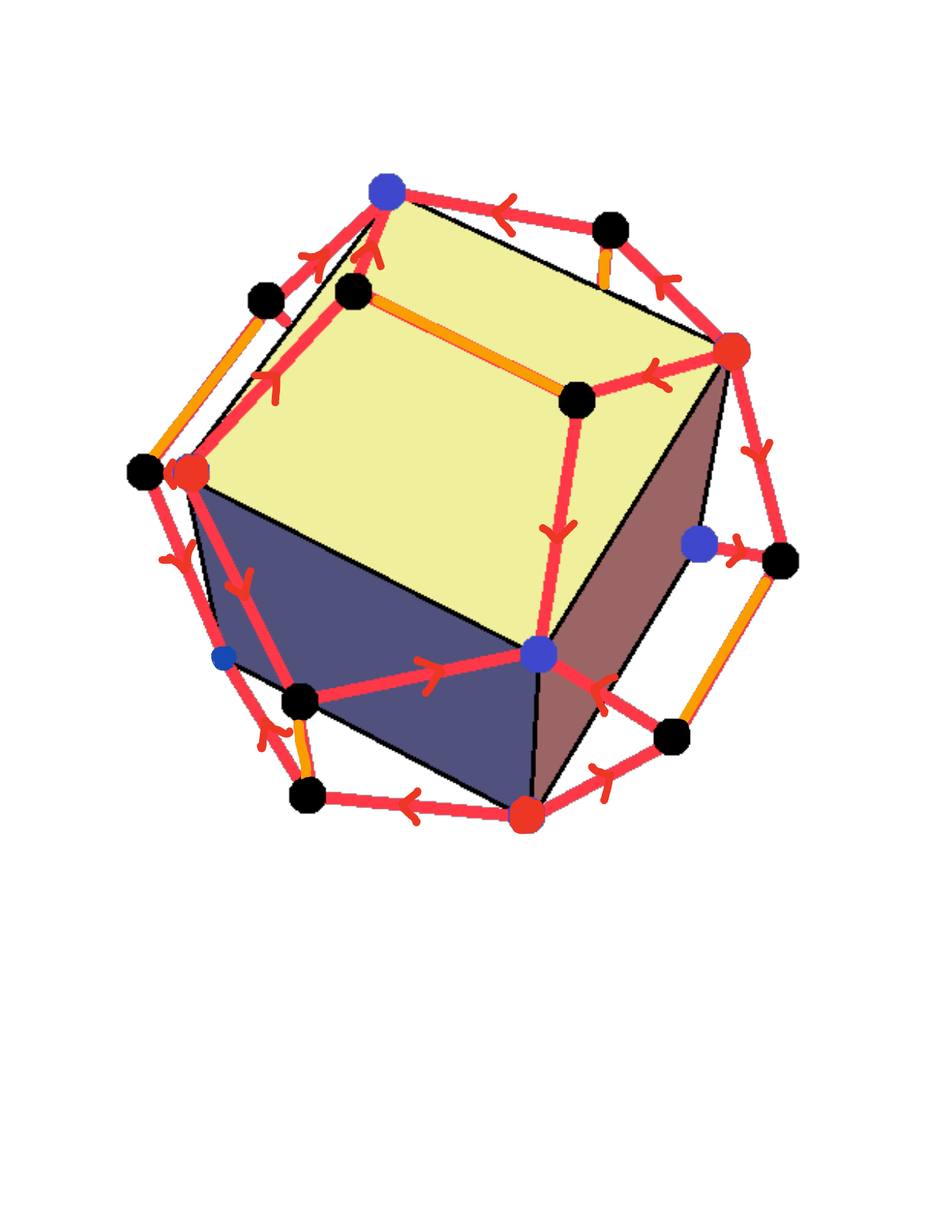}}
	\caption{\small{Red vertices belong to $E$ and are sources; blue vertices belong to $\iota E$ and are sinks.}}
	\label{cid}
\end{figure*}

Before we prove the Proposition \ref{propcwseq}, we list the intersection numbers of $u_{\edge{}}(\bar{e})$ resp.\ $u_{\trc}(\bar{e})$ with the elements of $\Delta_{\edge}$ and $\Delta_{\trc}$. Knowing these numbers will also be important for computing the local monodromies.

\begin{lem}\label{speint}
	Let $E$, $u_{\edge}$ and $u_{\trc}$ be as defined above, $\Delta_{\edge}$ and  $\Delta_{\trc}$ as defined in the last section. Then the class $[u_\edge(\bar{e})]$ resp.\  $[u_\trc(\bar{e})]$ has zero intersection number with the elements of $\Delta_\edge$ resp.\ $\Delta_\trc$, whereas
	for  $x\in \Csr_0(D)$ resp. $y\in \Csr_1(D)$,
	\[
		\la [u_{\edge}(\bar{e})],[\delta_x]\ra=
		\begin{cases} 3   & \text{if }  x\in E,       \\
              -3, & \text{if }  x\in \iota E, \\
              0   & \text{otherwise.}
		\end{cases}
		\quad
		\la [u_{\trc}(\bar{e})],[\delta_y]\ra=
		\begin{cases} -1 & \text{if }  in(y)\in E,       \\
              1, & \text{if }  in(y)\in \iota E, \\
              0  & \text{otherwise.}
		\end{cases}
	\]
\end{lem}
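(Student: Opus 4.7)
The plan is to expand $u_\trc(\bar e)$ and $u_\edge(\bar e)$ as $\Zds$-linear combinations of the basic cycles $\delta_x$ and $\delta_y$ and compute each pairing term by term via Lemma~\ref{lemma:}. The vanishing of the same-type intersections is then immediate: that lemma says two cycles of the same type pair to zero, so every summand in $\la[u_\trc(\bar e)],[\delta_{x'}]\ra=\sum_{x\in E}\la[\delta_x],[\delta_{x'}]\ra$ and in the analogous edge-type sum vanishes.

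The key geometric input for the mixed intersections is that no edge of $D$ joins two vertices of the inscribed cube $e$: the twelve edges of $e$ appear as face-diagonals of the twelve pentagonal faces of $D$, and in each such pentagon the two cube-vertices are non-adjacent. Hence for every $y\in\Csr_1(D)$ the four-element set $\{in(y),term(y),\iota(in(y)),\iota(term(y))\}$ of vertices lying on $y\cup\iota y$ meets $E$ in at most one point. Expanding $\la[u_\trc(\bar e)],[\delta_y]\ra$ as a sum over $x\in E$ therefore leaves at most one nonzero term, and a short case analysis on whether $in(y)$ lies in $E$, in $\iota E$, or in neither---combined with the sign rule of Lemma~\ref{lemma:}---yields the three stated values.

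For the second identity I expand
\[
\la[u_\edge(\bar e)],[\delta_{x_0}]\ra=\sum_{x\in E}\sum_{y:\,in(y)=x}\la[\delta_y],[\delta_{x_0}]\ra
\]
and distinguish three cases. If $x_0\in E$ then only the three edges with $in(y)=x_0$ can contribute (the remaining alternatives would require either $\iota x_0\in E$, which is impossible, or a $D$-edge between two cube-vertices); these three give a common sign by Lemma~\ref{lemma:}, totalling $+3$. If $x_0\in\iota E$ then only the three edges with $in(y)=\iota x_0\in E$ contribute, in the pattern $x_0=\iota(in(y))$, giving $-3$ by the same token. If $x_0\notin E\cup\iota E$, the surviving summands split into two families carrying opposite signs: edges $y$ with $in(y)\in E$ and $term(y)=x_0$, versus edges $y$ with $in(y)\in E$ and $term(y)=\iota x_0$.

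The cancellation in this last case is the main obstacle: one must show $|E\cap N(x_0)|=|E\cap N(\iota x_0)|$, where $N(\cdot)$ denotes the set of $D$-neighbors. The plan is a symmetry argument. The $\Isr$-stabilizer of $e$ is a subgroup of order $12$, isomorphic to $\Acal_4$, that preserves $E$ and $\iota E$ setwise---an odd permutation of the four body-diagonals of $e$ would swap them, but such rotations lie outside $\Isr\cong\Acal_5$. The $3$-fold stabilizer in $\Isr$ of any non-cube vertex of $D$ rotates about an axis that is not a body-diagonal of $e$, so it fails to preserve $e$; hence this $\Acal_4$ acts freely---and, by cardinality, regularly---on the twelve non-cube vertices of $D$. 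Some $g\in\Acal_4$ thus sends $x_0$ to $\iota x_0$, and since $g$ preserves $E$ setwise it bijects the $E$-neighbors of $x_0$ with those of $\iota x_0$. The two contributions cancel, giving the claimed~$0$.
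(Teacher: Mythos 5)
Your proof is correct and is essentially the expected unwinding of definitions that the paper elides with ``clear from the definitions (see also Figure~\ref{cid}).'' The two combinatorial inputs you isolate---that no $D$-edge joins two vertices of the inscribed cube, and, for the vanishing when $x_0\notin E\cup\iota E$, the free (hence regular) action of the stabilizer $\Acal_4$ of $e$ on the twelve non-cube vertices, giving $|E\cap N(x_0)|=|E\cap N(\iota x_0)|$---are exactly what is needed, and the latter is a clean way to handle the one case that does not reduce to a single term. (One can also get this last equality by a direct count, since each of the four vertices of $E$ contributes three neighbors among the twelve non-cube vertices, forcing $|E\cap N(x_0)|=1$ for every non-cube $x_0$ by the same transitivity; either way the argument is sound.)
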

\begin{proof}
	This is clear from the definitions (see also Figure \ref{cid}).
\end{proof}

In order to show that  $[u_{\edge}(\bar{e})]\in H_1(\Sigma)$ is divisible by $3$ we need:

\begin{lem}\label{lem:mod3}
	The corresponding 1-chain on $D$
	\[
		\tilde\delta_\edge^E:=\sum_{x\in E} \sum_{\{y\in \Csr_1(D): in(y)=x\}} y-\sum_{x\in \iota E} \sum_{\{y\in \Csr_1(D): in(y)=x\}} y
	\]
	is a boundary modulo $3$. In fact, slightly more is true:  if we are given a face $z_o$ of $D$ and write $D(z_o)$ for the
	complement of the interior of $z_o\cup \iota z_o$ in $D$, then $\tilde\delta_\edge^E$ is still a boundary modulo $3$ when regarded as a chain on $D(z_o)$.
\end{lem}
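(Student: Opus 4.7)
The plan is to proceed in two steps: first show that $\tilde\delta_\edge^E$ is a mod-$3$ cycle on $D$, which combined with $H_1(D;\Fds_3)=0$ produces the basic assertion; then apply mod-$3$ averaging by the antipodal involution $\iota$ to upgrade any such primitive to one that is supported off $\{z_o,\iota z_o\}$.

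\emph{Step 1 (cycle property).} A direct calculation gives
\[
    \partial\tilde\delta_\edge^E=-3\sum_{x\in E}x+3\sum_{x\in \iota E}x+\sum_{v}(a_v-b_v)\,v,
\]
where $v$ ranges over the twelve non-cube vertices of $D$ and $a_v$ (resp.\ $b_v$) is the number of $D$-edges joining $v$ to a vertex of $E$ (resp.\ $\iota E$). So the reduction is to show $a_v-b_v\equiv 0\pmod 3$, in fact $a_v=b_v=1$. The stabilizer of $E$ in $\Isr$ is the tetrahedral group $\Acal_4$ (acting as even permutations on the four vertices of $E$); I would verify it acts freely on the twelve non-cube vertices, because any non-identity element is a rotation whose axis is either a body diagonal of the inscribed cube (meeting $D$ only at cube vertices) or a cube face-axis (which coincides with an edge axis of $D$ and so passes through no vertex at all). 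Freeness on a set of size $12=|\Acal_4|$ forces transitivity, and the counts $\sum_va_v=\sum_vb_v=12$ then pin $a_v=b_v=1$. Since $D\approx S^2$ gives $H_1(D;\Fds_3)=0$, one may pick $\omega_0\in C_2(D;\Fds_3)$ with $\partial\omega_0\equiv\tilde\delta_\edge^E\pmod 3$.

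\emph{Step 2 (averaging to avoid $\{z_o,\iota z_o\}$).} Two observations: (i) $\iota$ swaps $E$ and $\iota E$, so $\iota_*\tilde\delta_\edge^E=-\tilde\delta_\edge^E$; (ii) since $\iota$ is orientation-reversing on the $2$-sphere $D$, the induced chain map on canonically oriented faces is $\iota_*z=-\iota z$, so in coefficient form $(\iota_*\omega)(z)=-\omega(\iota z)$. Using that $2$ is invertible in $\Fds_3$, set
\[
    \omega:=2^{-1}\bigl(\omega_0-\iota_*\omega_0\bigr).
\]
Then $\partial\omega\equiv\tilde\delta_\edge^E$ and $\iota_*\omega=-\omega$, which translates to $\omega(\iota z)=\omega(z)$ for every face $z$. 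In particular $\omega(\iota z_o)=\omega(z_o)$, so subtracting $\omega(z_o)\cdot[D]$ (where $[D]:=\sum_z z$ is the fundamental class, satisfying $\partial[D]=0$) produces a $2$-chain whose coefficients at $z_o$ and $\iota z_o$ both vanish---i.e.\ a chain on $D(z_o)$---still bounding $\tilde\delta_\edge^E$ mod $3$.

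The main obstacle is the combinatorial claim $a_v=b_v=1$ in Step 1, i.e.\ that every non-cube vertex has exactly one $E$-neighbor and one $\iota E$-neighbor; this is the only step that genuinely uses the geometry of $D$. After that the argument is formal: $H_1(D;\Fds_3)=0$ supplies a primitive, and mod-$3$ averaging by the orientation-reversing $\iota$ (which is available precisely because $2$ is invertible in $\Fds_3$) delivers one with the antipodal symmetry needed to make it vanish at both $z_o$ and $\iota z_o$ simultaneously.
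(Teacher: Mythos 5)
Your argument is correct and follows essentially the same route as the paper: check that $\tilde\delta_\edge^E$ is a cycle mod $3$, invoke $H_1(D;\Zds/3)=0$ to obtain a primitive $2$-chain, symmetrize by $\iota_*$ (using that $2$ is invertible mod $3$) to force $\omega(\iota z)=\omega(z)$, and then subtract a multiple of the fundamental cycle $\sum_z z$ to kill the coefficients at $z_o$ and $\iota z_o$ simultaneously. Your Step 1 is actually slightly more thorough than the paper's, which asserts $\partial\tilde\delta_\edge^E=3\iota E-3E$ directly from the definition, whereas you supply the underlying combinatorial fact $a_v=b_v=1$ (each non-cube vertex has exactly one neighbor in $E$ and one in $\iota E$) via the free, hence simply transitive, action of the $\Acal_4$-stabilizer of $E$ on the twelve non-cube vertices.
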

\begin{proof}
	It follows from the definition of $\tilde\delta_\edge^E$ that its boundary is equal to $3\iota E-3E$ (viewed as an element of $C_0(D)$) and  so $\tilde\delta_\edge^E$ is a cycle modulo $3$. Since $H_1(D; \Zds/3)=0$, this must be a boundary modulo $3$. This already implies that
	$\tilde\delta_\edge^E\equiv \sum_{z\in \Csr_2(D)} n_z \partial z \pmod{3 Z_1(D)}$ for certain $n_z\in \Zds$.
	Since $\iota_*$ takes $\tilde\delta_\edge^E$ to $\tilde\delta_\edge^{\iota E}=-\tilde\delta_\edge^E$  and $\iota_*z=-\iota z$,
	we can arrange that opposite faces
	have equal coefficients: just replace $n_z$ by an integer $n'_z$ satisfying $2n'_z\equiv n_z+n_{\iota z}\pmod{3}$.
	In particular, $n_{z_o}=n_{\iota z_o}$. Since the sum of all the (naturally oriented) faces of $D$ has zero boundary, we are free to subtract
	$n_{z_0}$ times that sum. This will make the coefficients of $z_o$ and $\iota z_o$ zero. So $\tilde\delta_\edge^E$ is a boundary modulo $3$ on $D(z_o)$.
\end{proof}

\begin{cor}\label{uedgemod3}
	The image of $u_{\edge}(\bar{e})$ in $H_1(\Sigma)$ is divisible by $3$.
\end{cor}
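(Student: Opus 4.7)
My plan is to reduce to a mod-$3$ vanishing and then exploit the pairing data already in Lemma~\ref{speint}. Since $H_1(\Sigma;\Zds)$ is torsion-free, $[u_\edge(\bar{e})]\in 3\,H_1(\Sigma;\Zds)$ is equivalent to $[u_\edge(\bar{e})]$ vanishing in $H_1(\Sigma;\Fds_3)$; and by the unimodularity of the intersection pairing on the closed oriented surface $\Sigma$, the latter is equivalent to $[u_\edge(\bar{e})]$ pairing to $0\in\Fds_3$ with every element of a spanning set of $H_1(\Sigma;\Fds_3)$. Lemma~\ref{speint} hands us this information for the obvious geometric cycles: $\langle[u_\edge(\bar{e})],[\delta_x]\rangle\in\{0,\pm 3\}$ for $[\delta_x]\in\Delta_\trc$ and $\langle[u_\edge(\bar{e})],[\delta_y]\rangle=0$ for $[\delta_y]\in\Delta_\edge$, so both pairings are $\equiv 0\pmod 3$. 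The corollary will therefore follow from the spanning claim that $\{[\delta_x]\}_{x\in\Csr_0(D)}\cup\{[\delta_y]\}_{y\in\Csr_1(D)}$ generates $H_1(\Sigma;\Fds_3)$.

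\textbf{Main step.} To prove the spanning claim I would proceed as follows. By Remark~\ref{rmk:basis}, $L_\trc:=\langle[\delta_x]\rangle_{x\in R}$ is a Lagrangian of rank $10$; since $L_\trc^\perp=L_\trc$, any class perpendicular to every $[\delta_x]$ already lies in $L_\trc$. It suffices to show that an element $\alpha=\sum_{x\in R}\xi_x[\delta_x]\in L_\trc$ (with $\xi_x\in\Fds_3$) that additionally pairs to $0$ with every $[\delta_y]$ must vanish. Extend $\xi$ to the full vertex set of $D$ by the antipode-odd rule $\xi_{\iota v}:=-\xi_v$ (well-defined because $\iota$ is free on $\Csr_0(D)$). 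A case-check over the four possibilities for which of $\{\mathrm{in}(y),\iota\,\mathrm{in}(y),\mathrm{out}(y),\iota\,\mathrm{out}(y)\}$ belong to $R$, using the sign rule in Lemma~\ref{lemma:} (reading the parenthetical ``initial point of $\iota x$'' as the natural ``initial point of $\iota y$''), yields in each case the uniform identity
\[
\langle\alpha,\,[\delta_y]\rangle \;=\; \xi_{\mathrm{out}(y)}-\xi_{\mathrm{in}(y)}\qquad\text{for every }y\in\Csr_1(D).
\]
Requiring this to vanish for all $y$ forces $\xi$ to be constant on the connected $1$-skeleton of $D$; combined with the odd rule this gives $2\xi\equiv 0$, and since $2$ is invertible in $\Fds_3$, $\xi\equiv 0$. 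Hence $\alpha=0$, which proves the spanning claim and thus the corollary.

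\textbf{Main obstacle.} The crux will be the sign-bookkeeping that produces the clean edge formula $\xi_{\mathrm{out}(y)}-\xi_{\mathrm{in}(y)}$: this requires careful reading of Lemma~\ref{lemma:} and a four-case split over the chosen representative system $R$. Everything else is nearly formal, depending only on the connectedness of the dodecahedral $1$-skeleton and on $2$ being a unit in $\Fds_3$. Note that this route avoids Lemma~\ref{lem:mod3} entirely; judging by the way that lemma is introduced in the text, the author's proof presumably lifts the mod-$3$ null-homology of $\tilde\delta_\edge^E$ from $D(z_o)$ to a $2$-chain on $\Sigma$ and must then control a truncation-correction $1$-chain in $C_1(\Sigma)$ modulo $3$ --- a more chain-theoretic route whose hard part is precisely that truncation term.
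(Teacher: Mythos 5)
Your proposal is correct, and it takes a genuinely different route from the paper's. Both proofs use unimodularity of the intersection pairing to reduce the corollary to verifying that $[u_\edge(\bar{e})]$ pairs to $0\bmod 3$ against enough classes; the difference is which classes. The paper tests against the genuine basis $\{[\delta_x],[\gamma_x]\}_{x\in R}$ of Remark~\ref{rmk:basis}: the pairings with the $[\delta_x]$ are handled by Lemma~\ref{speint}, but the pairings with the transverse arcs $\gamma_x$ require exactly Lemma~\ref{lem:mod3}, which writes $\tilde\delta^E_\edge$ as a mod-$3$ boundary on $D$ with an opposite pair of faces removed so that the arc (pushed into $D$) can be kept disjoint from those faces. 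You instead test against the spanning set $\{[\delta_x]\}\cup\{[\delta_y]\}$; the edge-type pairings are \emph{literally zero} by Lemma~\ref{speint}, so there is nothing left to estimate, and the whole burden shifts to proving the spanning claim. Your proof of that claim is sound: since $L_\trc$ is primitive Lagrangian, $L_\trc\otimes\Fds_3=L_\trc^\perp$ in $H_1(\Sigma;\Fds_3)$, so a class perpendicular to all $\delta_x$ and $\delta_y$ is a $\xi\in\Fds_3^R$ with (after your antipode-odd extension, which correctly absorbs the four-fold case split) $\xi_{\mathrm{out}(y)}-\xi_{\mathrm{in}(y)}=0$ for every edge $y$; connectedness of the $1$-skeleton of $D$ forces $\xi$ constant, and oddness then forces $2\xi=0$, hence $\xi=0$ since $2\in\Fds_3^\times$. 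What the two routes buy is complementary: the paper's proof is shorter given Lemma~\ref{lem:mod3} and reuses that lemma's auxiliary $D(z_o)$ device (which has no other role in the paper), whereas yours eliminates the $\gamma_x$ and the chain-level lemma altogether at the cost of a small linear-algebra lemma that, pleasantly, makes explicit where the prime $3$ enters, namely that $2$ is invertible mod $3$ and that the $1$-skeleton of the dodecahedron is connected. Both are valid; yours is arguably more self-contained within the homological bookkeeping already set up in Lemma~\ref{speint}.
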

\begin{proof}
	Let $\cup_{x\in R} \{[\delta_x],[\gamma_x]\}$ be a basis of $H_1(\Sigma)$ as in Remark \ref{rmk:basis}.
	Since the intersection pairing is unimodular, it suffices to show that the intersection number of
	$[(u_{\edge}(\bar{e}))]$ which every basis element is divisible by $3$. For the $[\delta_x]$ this follows from
	Lemma \ref{speint} above. We check this for $\gamma_x$, that is, we show that the intersection number of $\gamma_x$ with
	$\delta^E_\edge$ is divisible by $3$. Recall that $\gamma_x$ is the image of a path
	$\tilde\gamma_x$ on $\tilde\Sigma$ that connects an antipodal pair of points with the initial point
	$\tilde\gamma_x(0)$
	on the boundary component of $\tilde\Sigma$ whose center is $x$. Choose a face $w$ of $D$ which contains
	$x$, but such that the corresponding face $\tilde w$ of $\tilde\Sigma$ does not  $\tilde\gamma_x(0)$. We can then
	arrange that $\tilde\gamma_x$ avoids  the interior of $\tilde w\cup\iota\tilde w$ so that the image of $\tilde\gamma_x$ in $D$ lies in $D(w)$. We can also arrange that $\tilde\gamma_x$ is in general position with respect to the cellular decomposition of $\tilde\Sigma$ in the sense
	that it avoids the vertices and is transversal to the edges.
	The intersection number
	$\la[\tilde\gamma_x],[\delta^E_\edge]\ra$  can then be computed on  $D(w)$: we need to sum over intersection numbers of the image of $\tilde\gamma_x$ in $D(w)$ with $\tilde\delta^E_\edge$ (regarded as chain on $D(w)$).
	But by Lemma \ref{lem:mod3}, $\tilde\delta_\edge^E$ is on $D(w)$ a boundary modulo $3$ and hence
	$\la[\tilde\gamma_x],[\delta^E_\edge]\ra$ will be divisible by $3$.
\end{proof}

\begin{proof}[Proof of the Proposition \ref{propcwseq}]
	The assertion that $\Hom_{ \Zds\Isr}(V_o,B_1)$ is trivial follows if we show that $V$ does not appear in
	$C_2\otimes \Cds$. The latter is the complexified permutation representation of $\Isr$ on the set of faces of
	$D$ whose character is found to be as in the table \eqref{chtbimpal2}. This shows that
	the representation is isomorphic to $\Cds\oplus W\oplus I\oplus I'$, where $W$ is the $5$-dimensional irreducible representation. In particular, $V_o$ does not occur in $B_1$. So by the exact sequence \eqref{eqn:bacicexseq}, the natural map  $\Hom_{ \Zds\Isr}(V_o,Z_1)\to
		\Hom_{ \Zds\Isr}(V_o,H_1(\Sigma))$ is injective. Since we know that $\Hom_{ \Zds\Isr}(V,H_1(\Sigma; \Cds))$ is of dimension $2$, it follows that $\Hom_{ \Zds\Isr}(V_o,H_1(\Sigma))$ has rank $2$.
	\begin{table}
		\centering
		\caption{Character Table of $\Cds\Isr[c_2]$}\label{chtbimpal2}
		\begin{tabular}{cccccc}
			% after \\: \hline or \cline{col1-col2} \cline{col3-col4} ...
			\noalign{\smallskip}\hline\noalign{\smallskip}
			Conjugacy Class & (1) & (12)(34) & (123) & (12345) & (12354) \\
			\noalign{\smallskip}\hline
			                & 12  & 0        & 0     & 2       & 2       \\
			\noalign{\smallskip}\hline
		\end{tabular}
	\end{table}

	It is a priori clear that
	$\Hom_{ \Zds\Isr}(V_o,Z_\trc)\oplus \Hom_{ \Zds\Isr}(V_o,Z_\edge)\subset \Hom_{ \Zds\Isr}(V_o,Z_1)$. Our construction makes it plain that these summands are generated by $u_\trc$ resp.\ $u_\edge$.
	By Lemma \ref{uedgemod3}, the image $[u_\edge]$ of $u_\edge$ in $\Hom(V_o, H_1(\Sigma))$ is divisible by $3$.
	On the other hand, $[u_\edge]$  and $[u_\trc]$ will span $\Hom(V, H_1(\Sigma; \Cds))$ over $\Cds$ and so
	any element of $\Hom(V_o, H_1(\Sigma;))$ is  of the form $a[u_\edge(\bar{e})]+b[u_\trc(\bar{e})]$ for certain constants $a,b\in\Cds$. Lemma \ref{speint} shows that $[u_\edge(\bar{e})]$ resp.\ $[u_\trc(\bar{e})]$ have intersection product $3$ resp.\ $1$ with some homology class and hence $a\in \frac{1}{3}\Zds$ and $b\in\Zds$. It follows that
	$\Hom(V_o, H_1(\Sigma))$ is freely generated by $\frac{1}{3}[u_\edge]$ and $[u_\trc]$
\end{proof}

\section{The Local Monodromy}
Recall that on $\Sigma$ we defined a family of complex structures $J_\tau$ with $\tau\in (0,1)$ which defined a path $\gamma: (0,1)\to \Bsr^\circ$ in the base of the Winger pencil  traversing the positive interval $(\infty, \frac{27}{5})$. This path had a continuous extension to $[0,1]$ that gave rise to the stable degenerations $\Sigma_\edge$ (for $\gamma(0)=\infty$ and $\Sigma_\trc$ (for $\gamma(1)=\frac{27}{5})$). We will determine the monodromies of these degenerations.  For this it is convenient to regard $\gamma|(0,1)$ as a base point for $\Bsr^\circ$ (we here recall that if $X$ is a space, then  any map from a contractible space to $X$ can serve as its base point) and denote the fundamental group of $\Bsr^\circ$ with this base point by $\pi$. So this will then be part of the monodromy representation of $\pi$ on $H_1(\Sigma)$.
%The following was shown in \cite{zi2021geometry}.

If we replace the curve $C$ in \eqref{eqn:homologydec} by $\Sigma$ we obtain  a canonical decomposition
\begin{equation}\label{candecom}
	H_1(\Sigma;\Qds)\cong(V_\Qds
	\otimes \Hom_{\Qds\Isr}(V_\Qds,H_1(\Sigma,\Qds))\oplus(E_\Qds\otimes \Hom_{\Qds\Isr}(E_\Qds,H_1(\Sigma, \Qds))
\end{equation}
Since the monodromy action will preserve this decomposition, we have a monodromy representation
of $\pi$ on both $\Hom_{\Qds\Isr}(V_\Qds,H_1(\Sigma;\Qds))$ and $\Hom_{\Qds\Isr}(E_\Qds,H_1(\Sigma; \Qds))$.
We will focus on the first type and in particular on an integral version of it, namely $\Hom_{\Zds\Isr}(V_o,H_1(\Sigma))$.
We will denote that representation simply by $\rho_{V_o}$. In Subsection \ref{subsect:isotypical}  we observed
(in a much more general setting) that the symplectic form on $H_1(\Sigma; \Qds)$  and the inner product on $V_\Qds$ give rise to a symplectic form on $\Hom_{\Qds\Isr}(V_\Qds, H_1(\Sigma; \Qds))$. Since this space is of dimension two and the inner product on $V_\Qds$ is unique up to a positive scalar, such a form determines a little more than an orientation. Indeed, by Proposition \ref{propcwseq}
\[
	U_\edge:= \tfrac{1}{3}[u_\edge], \quad U_\trc:=[u_\trc]
\]
is a basis of $\Hom_{\Zds \Isr}(V_o,H_1(\Sigma))$ and we can do a rescaling such that $U_\edge\cdot U_\trc=1$. So then $\rho_{V_o}$ takes its values in $\Sp_1(\Zds)\cong\SL(2, \Zds)$.

In the following section we determine $\rho_{V_o}$  for the degenerations $\Sigma_\trc$  and
and $\Sigma_\edge$  and do a local discussion for the other degenerations. In the subsequent section we give a complete description of $\rho_{V_o}$.

If $C_s$ is (singular)  member of the Winger pencil and $U\subset \Bsr$ a small
disk-like neighborhood of $s$ (so that $C_s\subset \Wsr_{U}$ is a homotopy equivalence), then
for  any $t\in U-\{s\}$ the natural map $H_1(C_t)\to H_1(\Wsr_{U})\cong H_1(C_s)$ is onto. So if $L$ denotes the kernel, then we
get the short exact sequence
\begin{equation}\label{kHH}
	0\to L\to H_1(C_t)\to H_1(C_s)\to 0
\end{equation}
In case $C_s$ has only nodal singularities, $L$ is an $\Isr$-invariant isotropic sublattice.

\subsection{The Monodromies of the Degenerations of $\Sigma$}\label{LmSig}
In this section, we will determine the local monodromies  at the end points of $\gamma$.

Let us denote the dual intersection graph of $\Sigma_\edge$  by $G_\edge$. There is a  natural homotopy class of maps $\Sigma_\edge\to G_\edge$  which induces
an isomorphism $H_1(\Sigma_\edge)\to H_{1}(G_\edge)$.
Recall that  $H_1(G_\edge)$ is free of rank 10, so that the kernel
$L_\edge$ of $H_1(\Sigma)\to H_1(\Sigma_\edge)$ is in fact a primitive Lagrangian sublattice. The intersection product then identifies $L_\edge$ with the dual of $H_1(G_\edge)$ so that we get the short exact sequence
\begin{equation}\label{0HHH0}
	\xymatrix{
	0\ar[r]& L_\edge\ar[r]& H_1(\Sigma)\ar[r]^-{\phi}& L_\edge^{\vee}\ar[r]& 0.
	}
\end{equation}

The monodromy transformation $\rho_{\edge}: H_1(\Sigma)\to H_1(\Sigma)$ preserves the exact sequence \eqref{0HHH0} and acts non-trivially only on the middle term. It is given by the Picard-Lefschetz formula:
\begin{equation}\label{PLformu}
	\rho_{\edge}(h)=h+\sum_{l\in \triangle_\edge/\{\pm1\}}\la h,l\ra l,
\end{equation}
where $\triangle_\edge$ denotes the set of vanishing cycles in $H_1(\Sigma)$ defined by the degeneration (this set is invariant under multiplication with $-1$).

Likewise at the other end: if $G_\trc $ is the dual intersection graph of $\Sigma_\trc$, then the kernel of $H_1(\Sigma)\to H_1(\Sigma_\trc)\cong H_1(G_\trc)$ is the primitive Lagrangian sublattice $L_\trc$ we introduced earlier and we get a similar short exact sequence and a similar description of the associated monodromy $\rho_\trc$ in terms of $\Delta_\trc$.

Theorems \ref{monoSig} and Theorem \ref{spltSig} will give the local monodromy in each case and give an interesting property of exact sequence \eqref{0HHH0}. By  Proposition \ref{propcwseq}, the $\Zds$-module $\Hom_{\Zds\Isr}(V_o,H_1(\Sigma))$ is freely generated by  $U_{\edge}$ and $U_{\trc}$ and so it is natural to express the monodromies $\rho_\edge$ and $\rho_\trc$ in terms of these generators.

\begin{thm}\label{monoSig}
	The monodromy $\rho_\trc$ fixes $U_{\trc}$ and takes $U_{\edge}$ to $U_{\edge}+U_{\trc}$ and
	and the monodromy $\rho_\edge$ fixes $U_{\edge}$ and takes $U_{\trc}$ to $U_{\trc}-3U_{\edge}$.
\end{thm}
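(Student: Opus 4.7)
The strategy would be to compute $\rho_\trc$ and $\rho_\edge$ directly from the Picard--Lefschetz formula \eqref{PLformu}, unfolding the sums by means of the intersection data in Lemma \ref{speint}. Since $\bar e$ generates $V_o$ over $\Zds\Isr$ (Proposition \ref{onegenr}) and the monodromy is $\Isr$-equivariant, each of $\rho_\trc$ and $\rho_\edge$ is determined by what it does to the two classes $[u_\edge(\bar e)]$ and $[u_\trc(\bar e)]$ in $H_1(\Sigma)$.

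Two of the four assertions would fall out immediately: Lemma \ref{speint} says that $[u_\edge(\bar e)]$ is orthogonal to every element of $\Delta_\edge$ and $[u_\trc(\bar e)]$ is orthogonal to every element of $\Delta_\trc$, so substituting into \eqref{PLformu} yields $\rho_\edge(U_\edge)=U_\edge$ and $\rho_\trc(U_\trc)=U_\trc$ with no further work. For $\rho_\trc(U_\edge)$ I would evaluate
\[
\rho_\trc([u_\edge(\bar e)]) - [u_\edge(\bar e)] \;=\; \sum_{[\delta_x]\in\Delta_\trc/\{\pm 1\}} \la [u_\edge(\bar e)], [\delta_x]\ra [\delta_x].
\]
The quotient $\Delta_\trc/\{\pm 1\}$ is naturally identified with $\Csr_0(D)/\iota$, the ten antipodal pairs of vertices of $D$. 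By Lemma \ref{speint} only the four pairs inside the cube $e$ contribute, each giving $3[\delta_x]$ for the representative $x\in E$; the total is $3\sum_{x\in E}[\delta_x] = 3[u_\trc(\bar e)]$. Hence $\rho_\trc([u_\edge(\bar e)]) = [u_\edge(\bar e)] + 3[u_\trc(\bar e)]$, which after division by three reads $\rho_\trc(U_\edge) = U_\edge + U_\trc$.

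The analogous, and more delicate, calculation is $\rho_\edge(U_\trc)$. Here $\Delta_\edge/\{\pm 1\}$ has 15 classes, one per unoriented edge of $D$ modulo the antipodal involution. The geometric fact that keeps the bookkeeping under control is that no edge of $D$ has both endpoints in the cube $e$: in each pentagonal face of $D$ the two cube vertices (one in $E$, one in $\iota E$) span a diagonal of the pentagon and so are not adjacent. The 30 edges of $D$ therefore split into $12$ touching $E$, $12$ touching $\iota E$, and $6$ avoiding $e$ entirely; passing to $\Csr_1(D)/\iota$ identifies the touching-$E$ and touching-$\iota E$ groups, leaving $12+3=15$ classes in $\Delta_\edge/\{\pm 1\}$. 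For each of the $12$ touching classes I would pick the representative $y$ with $in(y)\in E$, so that Lemma \ref{speint} gives $\la [u_\trc(\bar e)],[\delta_y]\ra = -1$ and contribution $-[\delta_y]$. Summing over the $12$ representatives yields $-\sum_{y:\,in(y)\in E}[\delta_y] = -[u_\edge(\bar e)]$ by the very definition of $u_\edge(\bar e)$, so $\rho_\edge([u_\trc(\bar e)]) = [u_\trc(\bar e)] - [u_\edge(\bar e)]$, which in the $U$-basis reads $\rho_\edge(U_\trc) = U_\trc - 3U_\edge$.

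The main obstacle is precisely this last piece of bookkeeping on $\Delta_\edge$: one must enumerate the 15 classes without double-counting and verify that the orientation convention is consistent on each class. The ``no edge inside $e$'' property is what makes this clean, since it prevents the $E$-touching and $\iota E$-touching orientations of the same unoriented edge from interfering with one another in the sum.
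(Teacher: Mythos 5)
Your proof is correct and follows essentially the same path as the paper's: both reduce the computation to the generator $\bar e$ of $V_o$, apply the Picard--Lefschetz formula \eqref{PLformu}, and plug in the intersection data of Lemma \ref{speint}. The only cosmetic difference is that the paper justifies $\rho_\edge(U_\edge)=U_\edge$ and $\rho_\trc(U_\trc)=U_\trc$ by noting that $[u_\edge]$ and $[u_\trc]$ lie in the isotropic Lagrangian sublattices $L_\edge$ resp.\ $L_\trc$, whereas you deduce the same orthogonality directly from Lemma \ref{speint}; these are two phrasings of the same fact, and your explicit enumeration of the fifteen classes of $\Delta_\edge/\{\pm 1\}$ (using the observation that no edge of $D$ joins two vertices of the inscribed cube) makes the bookkeeping cleaner than the paper's slightly terse sum over $\Csr_1(D)/\{\iota\}$.
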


\begin{cor}\label{cor:monoSIg}
	Let $\alpha$ be a counterclockwise loop in $\Bsr^\circ$ which only contains the punctures  $\infty$ and $\frac{27}{5}$ in its interior and is based at some point on the image of $\gamma$, so that it goes first around $\infty$ and then around $\frac{27}{5}$.
	If $[\alpha]$ denotes its image in $\pi$, then $\rho_{V_o}[\alpha]=\rho_\edge\rho_\trc$ takes  $U_{\trc}$ to $-2U_\edge+U_\trc$ and $U_\trc$ to $-3U_\edge +U_{\trc}$ and hence is of order $3$.
\end{cor}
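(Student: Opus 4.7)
The plan is to deduce the corollary directly from Theorem \ref{monoSig} by straightforward matrix arithmetic on the rank-two lattice $\Hom_{\Zds\Isr}(V_o, H_1(\Sigma))$. First I would rewrite the two individual monodromies in the ordered basis $(U_\edge, U_\trc)$. Representing $aU_\edge + bU_\trc$ as the column $(a,b)^T$, Theorem \ref{monoSig} yields
\[
\rho_\trc = \begin{pmatrix} 1 & 0 \\ 1 & 1 \end{pmatrix}, \qquad
\rho_\edge = \begin{pmatrix} 1 & -3 \\ 0 & 1 \end{pmatrix}.
\]

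Next I would justify the identity $\rho_{V_o}[\alpha] = \rho_\edge\rho_\trc$. Since $\alpha$ is a counterclockwise loop in $\Bsr^\circ$ enclosing only the two punctures $\infty$ and $\tfrac{27}{5}$, and since the base point is taken on the image of $\gamma$, $\alpha$ is freely homotopic in $\Bsr^\circ$ to a concatenation of small counterclockwise loops around each puncture. By Section \ref{LmSig}, the puncture $\infty = \gamma(0)$ corresponds to the $\Sigma_\edge$ degeneration and $\tfrac{27}{5} = \gamma(1)$ to the $\Sigma_\trc$ degeneration; the associated local monodromies are $\rho_\edge$ and $\rho_\trc$, and the stated composition order is the one consistent with the conventions fixed in the preceding subsection.

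Third, I would multiply these out:
\[
M := \rho_\edge\rho_\trc = \begin{pmatrix} 1 & -3 \\ 0 & 1 \end{pmatrix}\begin{pmatrix} 1 & 0 \\ 1 & 1 \end{pmatrix} = \begin{pmatrix} -2 & -3 \\ 1 & 1 \end{pmatrix},
\]
so $M(U_\edge) = -2U_\edge + U_\trc$ and $M(U_\trc) = -3U_\edge + U_\trc$, which is what is asserted. To read off the order, I would observe that $\tce M = -1$ and $\det M = 1$, so the characteristic polynomial of $M$ is $x^2+x+1$. By Cayley--Hamilton, $M^2+M+I=0$, and multiplication by $M-I$ gives $M^3 = I$; since $M \neq I$, its order is exactly three.

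I do not foresee any genuine obstacle: once Theorem \ref{monoSig} is in hand, the corollary is pure $2\times 2$ matrix bookkeeping. The only point requiring real care is to get the composition order right — that is, matching the order in which the loop $\alpha$ winds around the two punctures to the order of multiplication of $\rho_\edge$ and $\rho_\trc$ — so that the final matrix comes out to be $M$ rather than its inverse (which would also be of order three but would act by $M^2$).
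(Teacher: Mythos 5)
Your proposal is correct and is essentially the paper's own argument (the paper leaves the corollary's proof implicit, but later writes down the same matrices $\rho_\edge=\bigl(\begin{smallmatrix}1&-3\\0&1\end{smallmatrix}\bigr)$, $\rho_\trc=\bigl(\begin{smallmatrix}1&0\\1&1\end{smallmatrix}\bigr)$ explicitly). Your computation $M=\rho_\edge\rho_\trc=\bigl(\begin{smallmatrix}-2&-3\\1&1\end{smallmatrix}\bigr)$ also silently corrects an evident typo in the statement, where ``takes $U_\trc$ to $-2U_\edge+U_\trc$'' should read ``takes $U_\edge$ to $-2U_\edge+U_\trc$,'' and the trace/determinant argument for order three matches the intended reasoning.
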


Before we prove this theorem, we establish some properties of the Lagrangian lattices.

Recall that $\Ksr$ is a five element-set of inscribed cubes of $D$ so that $\Isr$ becomes the group
of even permutations of $\Ksr$. Any vertex of $D$ is contained in precisely two such cubes, and the orientation orders this pair (the opposite vertex gives the oppositely ordered pair). In this this way we produce a
$\Zds\Isr$-linear map $L_\trc\to \wedge^2\Zds^\Ksr$. This is in fact an isomorphism because every ordered pair
of distinct inscribed cubes is associated to a vertex. We will therefore identify these two $\Zds\Isr$-modules.
Note that under this isomorphism $\Delta_\trc$ is identified with the collection of 20 vectors $e\wedge e'$,
with $(e,e')$ distinct elements of $\Ksr$.

The short exact sequence \eqref{kHH} becomes the following short exact sequence of $\Zds\Isr$-modules
\begin{equation}\label{LHLv}
	\xymatrix{
	0\ar[r]& L_\trc\ar[r]\ar[dr]_{j_\trc}& {H_1(\Sigma,\Zds)}\ar[r]^-{\phi}& L_\trc^{\vee}\ar[r]& 0\\
	&&{Z_1}\ar[u]&&
	}
\end{equation}
Here $j_\trc$ is the obvious map. Let us apply the left exact functor $\Hom_{\Zds\Isr}(V_o,\cdot)$ to the short exact sequence \eqref{LHLv} and combine it with the exact sequence \eqref{seqph1}
\begin{equation}\label{hom275}
	\xymatrix@C=1pc@R=2pc{
	0\ar[r]& \Hom_{\Zds\Isr}(V_o,L_\trc)\ar[r]\ar[dr]_{j_{\trc *}}&\Hom_{\Zds\Isr}(V_o,H_1(\Sigma))\ar[r]^-{\phi_*}&\Hom_{\Zds\Isr}(V_o, L_\trc^{\vee})\ar[r]& \Ext_{\Zds\Isr}(V_o,L_\trc)\\
	&&\Hom_{\Zds\Isr}(V_o,Z_1)\ar[u]&&
	}
\end{equation}
By Proposition \ref{propcwseq}, the vertical arrow $\Hom_{\Zds\Isr}(V_o,Z_1)\to \Hom_{\Zds\Isr}(V_o,H_1(\Sigma))$ is injective.
\\

The dual intersection graph $G_\edge$ has six vertices and every two vertices are joined by an edge. Hence in this case we get the complete graph with six vertices, i.e., a graph of type $K_6$. Let $n:\hat\Sigma_\edge\to \Sigma_\edge$ be the normalization map. %and assume in this section that $t\in \Bsr^{\circ}$ is near $\infty$.

The set  of connected components of $\hat\Sigma_\edge$ has $6$ elements and $\Isr$ acts on it by permutations. There is a (unique) $\Isr$-equivariant  bijection of this set onto the set $\Lsr$ introduced in Example \ref{inforWo} and we will identify the two. So now each $l\in \Lsr$ can  be thought of as  the connected component of $\hat\Sigma_\edge$.  Recall that we there defined a surjection $\wedge^2(\Zds^\Lsr)\to \wedge^2 W_o$ with kernel  $(\sum_{l\in \Lsr} l)\wedge \Zds^\Lsr\cong W_o$ making  the following sequence of $\Zds\Isr$-modules is exact
\begin{equation}\label{wedWo}
	0\to W_o\to \wedge^2 \Zds^{\Lsr}\to  \wedge^2W_o\to 0
\end{equation}
By taking the dual of this exact sequence, we get a sequence that is still exact
\begin{equation}\label{dualwedgwo}
	0\to (\wedge^2W_o)^{\vee}\to (\wedge^2 \Zds^\Lsr)^{\vee}\to W_o^{\vee}\to 0
\end{equation}

\begin{lem}\label{inftL}
	The natural homotopy class of maps $\Sigma_\edge\to G_\edge$ induces an isomorphism on $H^1$  and
	the  map which assigns to the ordered distinct pair $(l, l')$ in $\Lsr$ the $1$-cocycle on $G_\edge$ spanned by
	the vertices defined by $l$ and $l'$ induces an $\Isr$-equivariant isomorphism  $\wedge^2W_0\cong H^1(G_\edge)$. If we recall that $H^1(G_\edge)$ is naturally identified with the vanishing homology of the degeneration $\Sigma$ into $\Sigma_\edge$, then this isomorphism identifies the set $\Delta_\trc$ of vanishing cycles with  the
	set of unordered distinct pairs in $\Lsr$.

	Dually, $L_\edge^{\vee}=H_1(\Sigma_\edge)$ is as a $\Zds\Isr$-module isomorphic to $\wedge^2 W_o^{\vee}$.
\end{lem}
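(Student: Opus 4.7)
The strategy is to identify $G_\edge$ as the complete graph $K_6$ on the vertex set $\Lsr$ and then read off its cohomology directly from the exact sequence \eqref{wedWo}.

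First I would identify the dual graph. The normalization $\hat\Sigma_\edge$ is a disjoint union of six $5$-punctured spheres, and $\Isr$ acts transitively on its connected components with stabilizer a dihedral group of order $10$; this is exactly the $\Isr$-action on $\Lsr$, giving a canonical $\Isr$-equivariant bijection between $\pi_0(\hat\Sigma_\edge)$ and $\Lsr$. A genus count --- $\Sigma$ has genus $10$, each component has genus $0$, hence $b_1(G_\edge)=10$ --- together with $|V(G_\edge)|=6$ and $|E(G_\edge)|\le\binom{6}{2}=15$ forces $G_\edge$ to be $K_6$ on vertex set $\Lsr$. Because each component is simply connected, the collapse map $\Sigma_\edge\to G_\edge$ sending every component to its vertex is a homotopy equivalence, and in particular induces an isomorphism $H^1(\Sigma_\edge)\xrightarrow{\sim} H^1(G_\edge)$.

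Next I would compute $H^1(G_\edge)$ directly. Its cellular cochain complex is $C^0=\Zds^\Lsr \xrightarrow{\delta} C^1=\wedge^2\Zds^\Lsr$, where a $1$-cochain is an antisymmetric function of oriented edges and the identification sends the cochain dual to the edge $(l\to l')$ to $e_l\wedge e_{l'}$. A direct computation gives $\delta(e_l^{*})=s\wedge e_l$ with $s:=\sum_{l\in\Lsr}e_l$, so $\Img(\delta)=s\wedge\Zds^\Lsr$. But this is exactly the kernel of the surjection $\wedge^2\Zds^\Lsr\twoheadrightarrow\wedge^2 W_o$ in \eqref{wedWo}, so the quotient descends to an $\Isr$-equivariant isomorphism $H^1(G_\edge)\cong\wedge^2 W_o$ sending $[e_l\wedge e_{l'}]\mapsto\bar l\wedge\bar{l'}$. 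This is precisely the map described in the lemma.

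The remaining assertions are then formal. The nodes of $\Sigma_\edge$ are in $\Isr$-equivariant bijection with the edges of $G_\edge$, i.e.\ with the unordered distinct pairs in $\Lsr$. Under the Picard--Lefschetz identification $L_\edge=H^1(G_\edge)$ --- which follows from $L_\edge$ being a primitive Lagrangian whose quotient in $H_1(\Sigma)$ is $H_1(\Sigma_\edge)=H_1(G_\edge)$ --- the vanishing cycle at a node $\{l,l'\}$ is characterized by pairing to $\pm 1$ with the edge cocycle for that edge and to zero with all others, so it matches (up to sign) the class $[e_l\wedge e_{l'}]$, identifying $\Delta_\edge$ with the set of unordered distinct pairs in $\Lsr$. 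Dualizing, $L_\edge^{\vee}=H_1(\Sigma_\edge)\cong H^1(G_\edge)^{\vee}\cong(\wedge^2 W_o)^{\vee}=\wedge^2 W_o^{\vee}$, matching \eqref{dualwedgwo}. The hardest part is the coboundary computation in step two, where signs must be lined up between the cellular differential and the kernel of $\wedge^2\Zds^\Lsr\twoheadrightarrow\wedge^2 W_o$; once that is settled, steps one and three are essentially bookkeeping.
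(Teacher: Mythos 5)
Your proof is correct and takes essentially the same approach as the paper: both identify $G_\edge$ with the complete graph $K_6$ on $\Lsr$, compute the cellular cochain complex $C^0=\Zds^\Lsr\to C^1=\wedge^2\Zds^\Lsr$ with coboundary $\delta(l)=(\sum_{l'}l')\wedge l$, and match its image with the kernel $s\wedge\Zds^\Lsr$ of the surjection in \eqref{wedWo} to conclude $H^1(G_\edge)\cong\wedge^2W_o$, then dualize. You fill in two details the paper treats as already known or clear --- the genus count forcing $G_\edge=K_6$, and the explicit check that vanishing cycles pair correctly with edge cocycles --- and you silently correct the typo $\Delta_\trc\to\Delta_\edge$ in the lemma's statement.
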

\begin{proof}
	We have already observed that  $\Sigma_\edge \to G_\edge$ induces an isomorphism on $H^1$. Let us now recall
	how $H^1(G)=Z^1(G)/B^1(G)$ is computed for any finite graph $G$: the group of $1$-cocycles $Z^1(G)$ has as its generators the oriented edges (with the understanding that orientation reversal gives the opposite orientation) and the coboundary map assigns to a vertex the sum of the oriented edges which have that vertex as their source. If we do this for $G_\edge$, which is the complete graph on the set $\Lsr$, we see that $C^0(G_\edge)=\Zds^\Lsr=W$, $Z^1(G_\edge)=\wedge^2\Zds^\Lsr=\wedge^2W$ and the coboundary $\delta: C^0(G_\edge)=Z^1(G_\edge)$ is the map which assigns to $l\in \Lsr$ the sum $\sum_{l'\not=l} l'\wedge l=(\sum_{l'} l')\wedge l$. Hence
	$H^1(G_\edge)=\wedge^2 W_o$. The assertion regarding the vanishing cycles is clear.

	The universal coefficient theorem implies that $H_1(G_\edge)$ is naturally identified with the dual of
	$\wedge^2 W_o$. This is easily seen to be $\wedge^2 W_o^{\vee}$.
\end{proof}

From this Lemma the short exact sequence \eqref{0HHH0} becomes the following sequence of $\Zds\Isr$-modules.
\begin{equation}\label{WHW}
	\xymatrix{
	0\ar[r]& \wedge^2 W_o\ar[r]\ar[dr]_-{j_\edge}& H_1(\Sigma)\ar[r]& (\wedge^2 W_o)^\vee\ar[r]& 0\\
	&&Z_1\ar[u]&&
	}
\end{equation}
Note that $\wedge^2 W_o$ has a single generator as a  $\Zds\Isr$-module, for example $\bar{l} \wedge \bar{l'}$ with $l,l'$ distinct. We have an $\Isr$-isomorphism $\iota_\edge:\wedge^2 W_o\to Z_1$  which sends $\bar{l}\wedge \bar{l'}$ to the element in $V_{\edge}$ with the same stabilizer.
Let us apply the left exact functor $\Hom_{\Zds\Isr}(V_o,\cdot)$ to the short exact sequence \eqref{WHW} and combine it with the exact sequence \eqref{seqph1}
\begin{equation}\label{hominf}
	\xymatrix@C=0.5pc@R=2pc{
	0\ar[r]& \Hom_{\Zds\Isr}(V_o,\wedge^2 W_o)\ar[r]\ar[dr]_-{j_{\edge*}}&\Hom_{\Zds\Isr}(V_o,H_1(\Sigma))\ar[r]^-{\phi}&\Hom_{\Zds\Isr}(V_o, \wedge^2 W_o^{\vee})\ar[r]& \Ext_{\Zds\Isr}(V_o,\wedge^2 W_o)\\
	&&\Hom_{\Zds\Isr}(V_o,Z_1)\ar[u]&&
	}
\end{equation}
As in Proposition \ref{propcwseq} the vertical arrow $\Hom_{\Zds\Isr}(V_o,Z_1)\to \Hom_{\Zds\Isr}(V_o,H_1(\Sigma))$ is injective.

\begin{proof}[Proof of Theorem \ref{monoSig}]
	We proved that  $\Hom_{\Zds\Isr}(V_o,H_1(\Sigma))$ is freely generated by $\frac{1}{3}u_\edge$ and $u_\trc$ as $\Zds$-module in Proposition \ref{propcwseq}.  By the above description, their images lie $L_\edge$ resp.\  $L_\trc$. Hence $\rho_\edge$ resp. $\rho_\trc$ leaves $u_\edge$ resp. $u_\trc$ invariant.

	According to the Picard-Lefschetz formula
	\begin{equation*}
		\rho_\trc(\tfrac{1}{3}u_\edge)(\bar{e})-\tfrac{1}{3}u_\edge(\bar{e})=\sum_{\delta\in \Delta_{\trc}/\{\pm 1\}}\tfrac{1}{3}\la [u_{\edge}(\bar{e})],\delta\ra \delta=
		\sum_{x\in \Csr_0(D)/ \{\iota\}}\tfrac{1}{3}\la [u_{\edge}(\bar{e})],\delta_x\ra \delta_x.
	\end{equation*}
	If $x\in \Csr_0(D)$, then by Lemma \ref{speint}, $\frac{1}{3}\la [u_{\edge}(\bar{e})],\delta_x\ra$ equals $1$ when $\delta$ lies in $x\in E$,  $-1$ when $x\in \iota E$ and zero otherwise. So the sum on the right hand side is $\tfrac{1}{3}\sum_{x\in E} \delta_x =u_\trc(\bar{e}_5)$.
	This gives the asserted matrix representation.

	Similarly,  we have
	\begin{equation*}
		\rho_\edge(u_\trc(\bar{e}))-u_\trc(\bar{e})=\sum_{\delta\in \Delta_\edge/\{\pm1\}}\la [u_{\trc}(\bar{e})],\delta\ra \delta=
		\sum_{y\in \Csr_1(D)/ \{\iota\}}\la [u_{\trc}(\bar{e})],\delta_y\ra \delta_y.
	\end{equation*}
	Then Lemma \ref{speint} implies that $\la [u_{\trc}(\bar{e})],\delta_y\ra $ equals $-1$ when $in(y)\in E$,  $1$ when $in(y)\in \iota E$ and zero otherwise. Hence the right hand side is $-u_\edge(\bar{e})=-3. \tfrac{1}{3}u_\edge(\bar{e})$. The matrix representation follows.
\end{proof}

A short exact sequence \eqref{0HHH0} may not split as $\Zds\Isr$-module. However in these two cases we also have the following  interesting result.

\begin{thm}\label{spltSig}
	Let $L$ stand  for $L_\edge$ or $L_\trc$. Then  the natural map $\phi:\Hom_{\Zds\Isr}(V_o,H_1(\Sigma))\to\Hom_{\Zds\Isr}(V_o, L)$ is onto so that  the exact sequences \eqref{hom275} and \eqref{hominf} split to give an exact sequence
	\begin{equation*}
		0\to \Hom_{ \Zds\Isr}(V_o,L)\to \Hom_{ \Zds\Isr}(V_o,H_1(\Sigma,\Zds))\to \Hom_{ \Zds\Isr}(V_o,L^{\vee})\to 0.
	\end{equation*}
\end{thm}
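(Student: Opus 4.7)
To prove Theorem \ref{spltSig}, I plan to establish surjectivity of $\phi_*$ for both $L = L_\trc$ and $L = L_\edge$ by exhibiting an element whose image generates the target. By Proposition \ref{propcwseq}, $\Hom_{\Zds\Isr}(V_o, H_1(\Sigma))$ is freely generated by $U_\trc$ and $U_\edge$. The element $U_\trc$ lies in $\Hom_{\Zds\Isr}(V_o, L_\trc)$ because $u_\trc$ takes values in cycles of truncation type (which vanish in $H_1(\Sigma_\trc)$); similarly, $U_\edge$ lies in $\Hom_{\Zds\Isr}(V_o, L_\edge)$ by Corollary \ref{uedgemod3} together with the primitivity of $L_\edge$ in $H_1(\Sigma)$. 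So $\phi_*(U_\trc) = 0$ when $L = L_\trc$ and $\phi_*(U_\edge) = 0$ when $L = L_\edge$, and it is enough to show that $\phi_*(U_\edge)$ generates $\Hom_{\Zds\Isr}(V_o, L_\trc^\vee)$ and $\phi_*(U_\trc)$ generates $\Hom_{\Zds\Isr}(V_o, L_\edge^\vee)$.

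I first check that the target $\Hom_{\Zds\Isr}(V_o, L^\vee)$ is torsion-free of rank one in both cases. For $L_\trc = \wedge^2 \Zds^\Ksr$ this follows from $\Cds^\Ksr = \Cds \oplus V$, which gives $\wedge^2 \Cds^\Ksr = V \oplus \wedge^2 V = V \oplus E$; for $L_\edge = \wedge^2 W_o$ a brief character calculation with $\chi_W = (5,1,-1,0,0)$ yields $\chi_{\wedge^2 W} = (10,-2,1,0,0)$, which has inner product $1$ with $\chi_V = (4,0,1,-1,-1)$. Torsion-freeness is immediate since $L^\vee$ is torsion-free. Thus the image of $\phi_*$ is a nonzero sublattice of $\Hom_{\Zds\Isr}(V_o, L^\vee) \cong \Zds$ whose index remains to be pinned down.

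The crucial step is an intersection-number calculation via Lemma \ref{speint}. Since $L$ is Lagrangian for the intersection pairing, Poincar\'e duality identifies $\phi : H_1(\Sigma) \to L^\vee$ with $h \mapsto \la h, \cdot\ra|_L$. Thus $\phi_*(U_\edge)(\bar e) \in L_\trc^\vee$ is the integer-valued functional $\ell \mapsto \la \tfrac{1}{3}[u_\edge(\bar e)], \ell\ra$, which by Lemma \ref{speint} takes the value $\pm 1$ on $\delta_x$ for $x \in E$ (where $\la [u_\edge(\bar e)], \delta_x\ra = \pm 3$). If $\phi_*(U_\edge) = k\eta$ for some integer $k$ and some $\eta \in \Hom_{\Zds\Isr}(V_o, L_\trc^\vee)$, then $\eta(\bar e)$ is integer-valued on $L_\trc$, so $k$ must divide $\pm 1$, forcing $k = \pm 1$. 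The parallel argument for $L_\edge$ uses that $\phi_*(U_\trc)(\bar e)(\delta_y) = \la [u_\trc(\bar e)], \delta_y\ra \in \{0,\pm 1\}$ by Lemma \ref{speint}, with the value $\pm 1$ attained for some $y$, and the same divisibility argument concludes.

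The only subtle bookkeeping is to reconcile the two descriptions of $L^\vee$ (as $H_1$ of the dual intersection graph of the degenerate fiber and as the Poincar\'e dual of $L$ inside $H_1(\Sigma)$) and to confirm that $\{\delta_x\}_{x \in \Csr_0(D)}$ and $\{\delta_y\}_{y \in \Csr_1(D)}$ are generating sets of $L_\trc$ and $L_\edge$ on which the primitivity test can be applied. Both points are already established in Section \ref{GMg10}; with these identifications the proof reduces to the direct pairing computations above.
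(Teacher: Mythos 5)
Your proof is correct and follows essentially the same route as the paper: reduce to showing that $\Hom_{\Zds\Isr}(V_o, L^\vee)$ is of $\Zds$-rank one and then use the explicit intersection numbers of Lemma~\ref{speint} (the value $\pm 1$ attained on some $\delta_x$ or $\delta_y$) to force the index of the image of $\phi_*$ to be $1$. Your presentation is somewhat more explicit than the paper's — spelling out the rank-one computation via characters and the Poincaré-duality description of $\phi$ — but the substance of the argument is the same.
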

\begin{proof}
	This sequence is indeed exact after tensoring with $\Cds$.
	Since the middle term is the free abelian group generated by $u_\trc$ and $\frac{1}{3}u_\edge$, we can write any $u'\in \Hom_{ \Zds\Isr}(V_o,L^{\vee})$ as a complex multiple ($\lambda\in\Cds$, say) of the image of
	the image of $u_\trc$ in $\Hom_{ \Zds\Isr}(V_o,L_\edge^{\vee})$ (when $L=L_\edge$) resp.\ of \ $\frac{1}{3}u_\edge$ in
	$\Hom_{ \Zds\Isr}(V_o,L_\trc^{\vee})$ (when $L=L_\trc$). The theorem amounts to the assertion that then $\lambda\in\Zds$.
	Lemma \ref{speint} shows that `taking the intersection product with $u_\trc(\bar e)$ resp.\   $\frac{1}{3}u_\edge(\bar e)$' defines a linear form on $L_\edge$ resp.\ $L_\trc$ that  takes the value
	$1$. Hence $\lambda\in\Zds$.
\end{proof}

\subsection{The Local Monodromy Near Irreducible Curve with 6 Nodes}
For the parameter $s=-1$ in $\Bsr$, the associated curve $C_{-1}$ is irreducible with 6 nodes and its normalization $\hat{C}_{-1}$ is irreducible of genus 4. It is the unique smooth non-hyperelliptic $\Isr$-curve of genus 4 which is called the Bring's curve (see Chapter 5 of \cite{Cheltsov2015Cremona}). Its dual intersection graph {$G_s$} has only one vertex and six edges with the vertex marked with 4. In this case the kernel $L$ in the exact sequence  \eqref{kHH} is generated by 6 elements  and is a $\Zds\Isr$-module  a copy of $E$.
Since $\Hom_{\Zds\Isr}(V_o,E)=0$ it follows that $\Hom_{\Zds\Isr}(V_o,\cdot)$ applied  to the exact sequence \eqref{kHH} gives the exact sequence
\begin{equation*}
	0\to \Hom_{\Zds\Isr}(V_o,H_1(C_t))\to \Hom_{\Zds\Isr}(V_o,H_1(C_{-1}))\to\Ext_{\Zds\Isr}^1(V_o, L)
\end{equation*}
Since the map $\Hom_{\Zds\Isr}(V_o,H_1(C_t))\to \Hom_{\Zds\Isr}(V_o,H_1(C_{-1}))$ is invariant under the monodromy action,  we find

\begin{thm}\label{thm:triv}
	The monodromy of the degeneration  at $C_1$ acts trivially on $\Hom_{\Zds\Isr}(V_o,H_1(C_t))$.
\end{thm}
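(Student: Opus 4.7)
The plan is to exploit the representation-theoretic fact that $V$ and $E$ are inequivalent irreducible $\Qds\Isr$-representations, together with the identification of the vanishing lattice here as a copy of $E$, in order to reduce the monodromy action on $\Hom_{\Zds\Isr}(V_o,H_1(C_t))$ to the (manifestly trivial) action on $\Hom_{\Zds\Isr}(V_o,H_1(C_1))$.

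Concretely, I would begin from the short exact sequence \eqref{kHH} specialized to this degeneration,
\[
0 \to L \to H_1(C_t) \to H_1(C_1) \to 0,
\]
in which $L$ is the rank-$6$ vanishing lattice, isomorphic to $E$ as a $\Zds\Isr$-module. The Picard--Lefschetz formula
\[
\rho(x) \;=\; x \;+\; \sum_{\delta\in\triangle/\{\pm 1\}} \la x,\delta\ra\, \delta,
\]
with $\triangle\subset L$ the set of vanishing cycles, gives two immediate consequences: $\rho$ fixes every element of $L$, and $\rho$ induces the identity on the quotient $H_1(C_1)=H_1(C_t)/L$, since every correction term lies in $L$ and is killed in the quotient.

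Next, I would apply the left-exact functor $\Hom_{\Zds\Isr}(V_o,-)$ to this exact sequence. Since $V_\Qds$ and $E_\Qds$ are distinct irreducible $\Qds\Isr$-modules, Schur's lemma gives $\Hom_{\Qds\Isr}(V_\Qds,E_\Qds)=0$ and hence $\Hom_{\Zds\Isr}(V_o,L)=0$. The resulting long exact sequence therefore begins with an injection
\[
\Hom_{\Zds\Isr}(V_o,H_1(C_t))\ \hookrightarrow\ \Hom_{\Zds\Isr}(V_o,H_1(C_1)),
\]
which is $\rho$-equivariant by functoriality. Because $\rho$ acts as the identity on $H_1(C_1)$, it acts as the identity on the target, and injectivity then forces $\rho$ to act trivially on the source, which is precisely the theorem.

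There is no substantive obstacle once the two inputs $L\cong E$ and $\Hom_{\Qds\Isr}(V,E)=0$ are in hand. The latter is formal from Schur, and the former is supplied in the paragraph immediately preceding the theorem; the rest of the argument is pure functorial bookkeeping. To verify $L\cong E$ independently, one would compute the $\Isr$-character of the permutation representation on the six nodes (a single $\Isr$-orbit with stabilizer of order $10$) and compare it with the known decomposition $H_1(C_t;\Cds)=V^{\oplus 2}\oplus E^{\oplus 2}$, which places the rank-$6$ sublattice $L$ into the $E$-isotypical component.
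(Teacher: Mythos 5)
Your proof is correct and follows essentially the same route as the paper: apply $\Hom_{\Zds\Isr}(V_o,-)$ to the sequence \eqref{kHH}, use $L\cong E$ and $\Hom_{\Zds\Isr}(V_o,E)=0$ to get an injection into $\Hom_{\Zds\Isr}(V_o,H_1(C_{-1}))$, on which the monodromy acts trivially. The only addition is your sketch of an independent verification that $L$ lies in the $E$-isotypical part, which the paper simply asserts; note that this also follows at once from the fact that the only irreducibles occurring in $H_1(C_t;\Qds)$ are $V_\Qds$ (dimension $4$) and $E_\Qds$ (dimension $6$), so a rank-$6$ $\Zds\Isr$-sublattice cannot contain a copy of $V_\Qds$.
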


This  completely determines the monodromy representation $\rho_{V_o}$:

\begin{cor}\label{cor:}
	In terms of the  basis $(U_\edge, U_\trc)$ of $\Hom_{\Zds\Isr}(V_o,H_1(C_{-1}))$ the monodromy $\rho_{V_o}$ takes the simple loops around
	$\infty$ and $\frac{27}{5}$  to
	\[
		\rho_\edge=\begin{pmatrix} 1 & -3\\ 0 & 1\end{pmatrix} \text{ resp. } \rho_\trc=\begin{pmatrix} 1  & 0\\ 1& 1\end{pmatrix},
	\]
	is trivial around the puncture $s=-1$ and hence is given around the puncture $s=0$ by
	\[
		\rho_{0}:=(\rho_\trc\rho_\edge)^{-1}=\begin{pmatrix} -2 & 3\\ -1 & 1\end{pmatrix}.
	\]
	In particular, $\rho_{0}$ is of order $3$.
	The image of $\rho_{V_o}$ preserves the sublattice $3U_\edge +U_\trc$, in other words it takes its values in the subgroup $\Gamma_1(3)$ of matrices $(\begin{smallmatrix}  a&b\\ c& d\end{smallmatrix})\in \SL(2, \Zds)$ with $b\in 3\Zds$ and $a\equiv d\equiv 1\pmod{3}$.
\end{cor}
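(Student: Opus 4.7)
The corollary is essentially a bookkeeping consequence of the preceding theorems combined with a small $\SL_2(\Zds)$ computation, so the plan is to assemble the earlier results and carry that computation out. First, I would translate Theorem~\ref{monoSig} directly into matrices in the ordered basis $(U_\edge, U_\trc)$: that $\rho_\trc$ fixes $U_\trc$ and sends $U_\edge\mapsto U_\edge+U_\trc$ gives the matrix $\begin{pmatrix}1&0\\1&1\end{pmatrix}$, while $\rho_\edge$ fixing $U_\edge$ and sending $U_\trc\mapsto U_\trc-3U_\edge$ gives $\begin{pmatrix}1&-3\\0&1\end{pmatrix}$, matching the two displayed matrices in the statement.

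Next I would use that $\Bsr\cong\Pds^1$ has exactly four singular points in the pencil, namely $s\in\{0,\tfrac{27}{5},\infty,-1\}$, so $\pi_1(\Bsr^\circ)$ is generated by four small counterclockwise loops around these punctures subject to the single relation that their product in the cyclic order given by the orientation is trivial. Combining this with Theorem~\ref{thm:triv}, which says the monodromy at $s=-1$ is trivial, determines $\rho_0$ from $\rho_\edge$ and $\rho_\trc$: a direct multiplication gives $\rho_\trc\rho_\edge=\begin{pmatrix}1&-3\\1&-2\end{pmatrix}$, and inverting yields $\rho_0=\begin{pmatrix}-2&3\\-1&1\end{pmatrix}$, as claimed. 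Since this matrix has trace $-1$ and determinant $1$, Cayley--Hamilton gives $\rho_0^2+\rho_0+I=0$, whence $\rho_0^3=I$; as $\rho_0\neq I$, its order is exactly $3$.

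Finally, for the $\Gamma_1(3)$ assertion I would note that the image of $\rho_{V_o}$ is generated by $\rho_\edge$, $\rho_\trc$ and $\rho_0$, while the matrices $(\begin{smallmatrix}a&b\\c&d\end{smallmatrix})\in\SL_2(\Zds)$ with $b\in 3\Zds$ and $a\equiv d\equiv 1\pmod 3$ form a subgroup of $\SL_2(\Zds)$; hence it suffices to check those congruences on each of the three generators, which is immediate by inspection. The only real obstacle I anticipate is a very mild one, namely fixing the orientation convention for the product-equals-identity relation in $\pi_1(\Bsr^\circ)$ so that the inverse appears on the correct side in $\rho_0=(\rho_\trc\rho_\edge)^{-1}$; but since the stated subgroup is closed under inversion, this sign ambiguity affects only the explicit matrix for $\rho_0$, not the $\Gamma_1(3)$ containment, which holds in either case.
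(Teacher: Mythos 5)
Your proposal is correct and follows exactly the route the paper intends (the paper gives no separate proof of this corollary, treating it as an immediate assembly of Theorem~\ref{monoSig}, Theorem~\ref{thm:triv}, and the $\pi_1(\Pds^1\setminus\{4\text{ points}\})$ product relation). Your matrix computations — $\rho_\trc\rho_\edge=\left(\begin{smallmatrix}1&-3\\1&-2\end{smallmatrix}\right)$, its inverse $\left(\begin{smallmatrix}-2&3\\-1&1\end{smallmatrix}\right)$, the trace $-1$ Cayley--Hamilton argument for order $3$, and the check that $\rho_\edge,\rho_\trc$ satisfy the congruences defining $\Gamma_1(3)$ — all check out, and your remark that the orientation convention only affects the explicit form of $\rho_0$ and not the $\Gamma_1(3)$ containment is a legitimate observation.
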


\begin{rmk}\label{rem:repbas}
	if we replace our ordered basis by $(U_\trc, -U_\edge)$, then in the description of  $\Gamma_1(3)$ the condition  $b\in 3\Zds$  becomes $c\in 3\Zds$, which gives the more customary definition $\Gamma_1(3)$.
\end{rmk}

\subsection{The Local Monodromy Near the Triple Conic}
By way of a  check on our computations, we now also give a geometric proof of the fact that the monodromy around $s=0$ is of order three.
Remember that $C_{0}$ is the unstable curve $3K$, where $K$ is a $\Isr$-invariant (smooth) conic. Let $U\subset \Bsr$ be an open  disk centered at $s=0$ of radius $<27/5$.
We proved in \cite{zi2021geometry} that by doing a base change over $U$ of order $3$ (with Galois group $\mu_3$), given by
$\hat t\in \hat U\mapsto t=\hat t^3\in U$,  the  pull back of $\Wsr_U/\hat U$ can be modified over the central fiber $C_0$ only
to make it a smooth family $\hat W_{\hat U}/\hat U$ which still retains the $\mu_3$-action. The central fiber is then a smooth curve
$\hat C_0$ with an action of $\Isr\times \mu_3$ whose $\mu_3$-orbit space gives $K$. This implies that
the monodromy of the original family around $0$ (which is a priori only given as an isotopy class of diffeomorphisms of a nearby smooth fiber) can be represented by the action of a generator $\phi\in\mu_3$ on $\hat C_0$ (which indeed commutes with the $\Isr$-action on $C_0$).

\begin{cor}
	Let $t\in U-\{0\}$. The monodromy automorphism acts on $H_1(\hat C_0; \Cds)$ with eigenvalues of order $3$ only.
	In particular it acts on $\Hom_{\Zds \Isr}(V_o,H_2(\hat C_0))$ with order $3$.
\end{cor}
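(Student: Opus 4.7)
The plan is to exploit the identification given just before the corollary: after the degree-$3$ base change, the monodromy transformation around $s=0$ is realized as the action of a generator $\phi\in\mu_3$ on the smooth central fiber $\hat C_0$, and $\hat C_0/\mu_3$ is canonically identified with the $\Isr$-invariant smooth conic $K\subset\Pds^2$. Since $\phi^3=1$ as a holomorphic automorphism of $\hat C_0$, every eigenvalue of the induced action on $H_1(\hat C_0;\Cds)$ is a third root of unity, so the first claim reduces to showing that $1$ is not an eigenvalue.

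To rule out the eigenvalue $1$, I would invoke the standard fact that for a finite group $G$ acting holomorphically on a compact Riemann surface, the $G$-invariant part of its rational cohomology is canonically the rational cohomology of the quotient (by the transfer/averaging argument, since $|G|$ is invertible in $\Qds$). Applied to the $\mu_3$-action on $\hat C_0$ with quotient $K\cong\Pds^1$, this yields
\[
    H_1(\hat C_0;\Cds)^{\mu_3}\cong H_1(K;\Cds)=0.
\]
Hence $\phi$ has no nonzero invariant vector on $H_1(\hat C_0;\Cds)$, and all its eigenvalues are primitive third roots of unity, proving the first assertion.

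For the second assertion (where I read $H_1$ in place of the $H_2$ in the statement), the point is that $\Hom_{\Zds\Isr}(V_o,H_1(\hat C_0))$ is the rank-$2$ $\Zds$-lattice sitting in the isotypical decomposition of $H_1(\hat C_0;\Qds)$, and the action of $\phi$ preserves this decomposition because $\phi$ commutes with $\Isr$. By the first part, the two eigenvalues of $\phi$ on this rank-$2$ summand are primitive cube roots of unity; since the characteristic polynomial is rational, the eigenvalues must be the complex-conjugate pair $\{\omega,\bar\omega\}$, and so $\phi$ acts with order exactly $3$.

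I do not anticipate a real obstacle here: once the identification of the monodromy with a generator of $\mu_3$ is taken from the preceding paragraph and the quotient is recognized as $\Pds^1$, both statements drop out from elementary representation theory of a cyclic group of prime order combined with the invariant-cohomology/quotient compatibility. The only point that needs a sentence of care is justifying $H^1(\hat C_0;\Cds)^{\mu_3}=H^1(K;\Cds)$ in the presence of ramification — which is handled by the transfer, or equivalently (and perhaps more transparently here) by observing that the $\mu_3$-invariant holomorphic forms on $\hat C_0$ pull back from $H^0(K,\omega_K)=0$.
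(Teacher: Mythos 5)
Your proof is correct and takes essentially the same route as the paper: the paper likewise observes that the $\phi$-invariant part of $H_1(\hat C_0;\Cds)$ maps isomorphically onto $H_1(C_0;\Cds)=H_1(K;\Cds)=0$, so all eigenvalues are primitive cube roots of unity. You also correctly read $H_2$ as a typo for $H_1$ and supply the (routine) justification for the ``in particular'' that the paper leaves implicit.
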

\begin{proof}
	The subspace of $H_1(\hat C_0; \Cds)$ on which $\phi$ acts as the identity maps isomorphically onto $H_1(C_0; \Cds)=H_1(K; \Cds)=0$
	and hence $\phi$ has eigenvalues of order $3$ only.
\end{proof}

\section{Global Monodromy and the  Period  Map: proof of Theorem \ref{mtm2}}
We first show that $\rho_\edge$ and $\rho_\trc$ generate  all of $\Gamma_1(3)$. This is in fact known, but let us outline a proof nevertheless.
Let us first observe that the natural map $\SL(2,\Zds)\to \SL(2,\Fds_3)$ is onto and that $\Gamma_1(3)$ is the preimage of the subgroup
of $ \SL(2,\Fds_3)$ which fixes the second basis vector of $\Fds_3^2$. Since the  $ \SL(2,\Fds_3)$-orbit of that vector consists of all nonzero elements of $\Fds_3^2$, its index is $8$.

Now recall that $\SL(2,\Zds)$ is generated by
\[
	T:=\begin{pmatrix} 1 & 1\\ 0 & 1\end{pmatrix} \text{  and  } S:=\begin{pmatrix} 0  & 1\\ -1& 0\end{pmatrix},
\]
and that a set of defining relations is $S^4=(ST)^6=1$ and $S^2T=TS^2$. The element $S^2$ represents $-1$ and generates the kernel of the projection
$\SL(2,\Zds)/\{\pm 1\}\to \PSL(2,\Zds)$. So this gives also a presentation of $\PSL(2,\Zds)$: if $\overline S$ resp.\ $\overline T$ denotes their images in $\PSL(2,\Zds)$,
then a set of defining relations for $\PSL(2,\Zds)$ becomes $\overline S^2=(\overline S\, \overline T)^3=1$.

Note that $\rho_\edge=T^{-3}$ and $\rho_\trc=ST^{-1}S^{-1}$  and so our  monodromy group
$\Gamma$ is the subgroup of $\SL(2,\Zds)$ generated by $T^3$ (which defines defines a cusp of width $3$)  and $T':=STS^{-1}$ (which defines a cusp of width $1$).

\begin{prop}\label{prop:}
	The  index of $\Gamma$ in $\SL(2,\Zds)$ is $8$ so that $\Gamma=\Gamma_1(3)$. In fact the left cosets of $\Gamma$ in $\SL(2,\Zds)$ are represented by
	by $\{1,T,T^{-1},S\}\cup \{1, T, T^{-1}, S\}S^2$.
\end{prop}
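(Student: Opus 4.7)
My plan is to split the argument into two halves: first show $\Gamma \subseteq \Gamma_1(3)$, and then exhibit eight elements of $\SL_2(\Zds)$ that lie in pairwise distinct left $\Gamma$-cosets. Since the paper has already noted that $[\SL_2(\Zds) : \Gamma_1(3)] = 8$, these two facts together force $\Gamma = \Gamma_1(3)$ and identify the proposed list $R := \{1,T,T^{-1},S\} \cup \{1,T,T^{-1},S\}S^2$ as a complete set of left coset representatives.

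The inclusion $\Gamma \subseteq \Gamma_1(3)$ is a direct congruence check on the two generators: $T^3 = \begin{pmatrix} 1 & 3 \\ 0 & 1 \end{pmatrix}$ and $T' = STS^{-1} = \begin{pmatrix} 1 & 0 \\ -1 & 1 \end{pmatrix}$ both satisfy $b \equiv 0 \pmod{3}$ and $a \equiv d \equiv 1 \pmod{3}$, so they lie in $\Gamma_1(3)$.

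For the opposite direction I will exploit the identification of the left coset space $\SL_2(\Zds)/\Gamma_1(3)$ with $\Fds_3^2\setminus\{0\}$ recalled in the paragraph above the statement: since $\Gamma_1(3)$ is the preimage of the stabilizer of $e_2 \in \Fds_3^2$ under the natural left action of $\SL_2(\Fds_3)$, the assignment $g\Gamma_1(3) \mapsto g\cdot e_2 \pmod{3}$ is a well-defined bijection onto the eight-element set $\Fds_3^2\setminus\{0\}$. Applying each element of $R$ to $e_2$ produces
\[
(0,1),\ (1,1),\ (-1,1),\ (1,0),\ (0,-1),\ (-1,-1),\ (1,-1),\ (-1,0) \in \Fds_3^2,
\]
which exhaust $\Fds_3^2\setminus\{0\}$. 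The eight elements of $R$ therefore lie in pairwise distinct cosets of $\Gamma_1(3)$, and hence \emph{a fortiori} in pairwise distinct cosets of $\Gamma$, giving $[\SL_2(\Zds) : \Gamma] \geq 8$. Combined with $\Gamma \subseteq \Gamma_1(3)$, this forces $\Gamma = \Gamma_1(3)$ and shows $R$ to be a complete set of left coset representatives.

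No essential obstacle arises; the main content is the identification $\SL_2(\Zds)/\Gamma_1(3) \cong \Fds_3^2\setminus\{0\}$, after which everything reduces to a finite computation. If one prefers a purely group-theoretic route that avoids the index fact, one may check directly that the partition $\bigcup_{g\in R} g\Gamma$ is closed under left multiplication by the generators $T$ and $S$ of $\SL_2(\Zds)$, using only $T^3, T' \in \Gamma$ together with the central relation $S^2 T = T S^2$.
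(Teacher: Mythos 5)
Your argument establishes two things correctly: the containment $\Gamma \subseteq \Gamma_1(3)$ via the congruence check on $T^3$ and $T'$, and the fact that the eight elements of $R$ lie in pairwise distinct left $\Gamma_1(3)$-cosets (and hence pairwise distinct $\Gamma$-cosets). But these two facts do \emph{not} force $\Gamma = \Gamma_1(3)$. They give only the lower bound $[\SL_2(\Zds):\Gamma]\geq 8$, whereas the containment $\Gamma\subseteq\Gamma_1(3)$ tells you $[\SL_2(\Zds):\Gamma]=8\cdot[\Gamma_1(3):\Gamma]$. Nothing in the argument excludes $[\Gamma_1(3):\Gamma]>1$: if $\Gamma$ were, say, an index-two subgroup of $\Gamma_1(3)$, then $[\SL_2(\Zds):\Gamma]=16\geq 8$ and $\Gamma\subseteq\Gamma_1(3)$ would both still hold, and you could still find eight elements in pairwise distinct $\Gamma$-cosets. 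Exhibiting $n$ elements in distinct cosets never pins down the index from above; for that you must show that the cosets $r\Gamma$, $r\in R$, actually \emph{cover} $\SL_2(\Zds)$.

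That covering statement --- that $\bigcup_{r\in R} r\Gamma$ is all of $\SL_2(\Zds)$ --- is precisely what you relegate to your final sentence as an optional ``purely group-theoretic route that avoids the index fact.'' It is not optional; it is the step that carries the real content, and it is exactly how the paper proceeds: after reducing to $\PSL_2(\Zds)$ (using that $S^2=-1$ is central), the paper verifies that the set $\{1,\overline T,\overline T^{-1},\overline S\}\cdot\overline\Gamma$ is stable under left multiplication by $\overline T$ and $\overline S$, using the relations $\overline S^2=(\overline S\,\overline T)^3=1$ together with $\overline T^3,\ \overline T'\in\overline\Gamma$. That closure gives $[\PSL_2(\Zds):\overline\Gamma]\leq 4$, hence $[\SL_2(\Zds):\Gamma]\leq 8$, and only then does the containment $\Gamma\subseteq\Gamma_1(3)$ (equivalently your lower bound) upgrade the inequality to equality. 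Your $\Fds_3^2\setminus\{0\}$ computation is a clean way to re-derive $[\SL_2(\Zds):\Gamma_1(3)]=8$ and to verify that $R$ is a transversal for $\Gamma_1(3)$, but it says nothing about the size of $\Gamma$ itself. To repair the proof, carry out the closure check you sketch at the end --- or simply reproduce the paper's $\PSL_2$-level identities.
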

\begin{proof}
	Since $S^2=-1$ is central, it suffices to show
	that the corresponding statement holds for the subgroup $\overline\Gamma$ in $\PSL(2, \Zds)$  generated by $\overline T$ and
	$\overline T'= \overline S\,\overline T\, \overline S^{-1}$: we then must show that $\{1,\overline T, \overline T^{-1}, \overline S\}$ represent its left cosets. In other words, we must show that this set is invariant under left multiplication by $\overline T$ or $\overline S$. This follows from the identities
	\begin{itemize}
		\item[] $\overline T\, \overline S=\overline S\, \overline T'\in \overline S\, \overline\Gamma$,
		\item[] $\overline S\, \overline T=(\overline S\, \overline T)^{-2}=\overline T^{-1}\overline S^{-1}\overline T^{-1}\overline S^{-1}=\overline T^{-1}\overline S \overline T^{-1}\overline S=\overline T^{-1}\overline T'^{-1}\in \overline T^{-1}\overline\Gamma$, and
		\item [] $\overline S\, \overline T^{-1}=(\overline T\, \overline S)^{-1}=\overline T\, \overline S\,\overline T\, \overline S=\overline T\,\overline T'\in \overline T\,\overline\Gamma$.\qedhere
	\end{itemize}
\end{proof}
\begin{figure*}
	\centering
	{\includegraphics[width=1.0\textwidth]{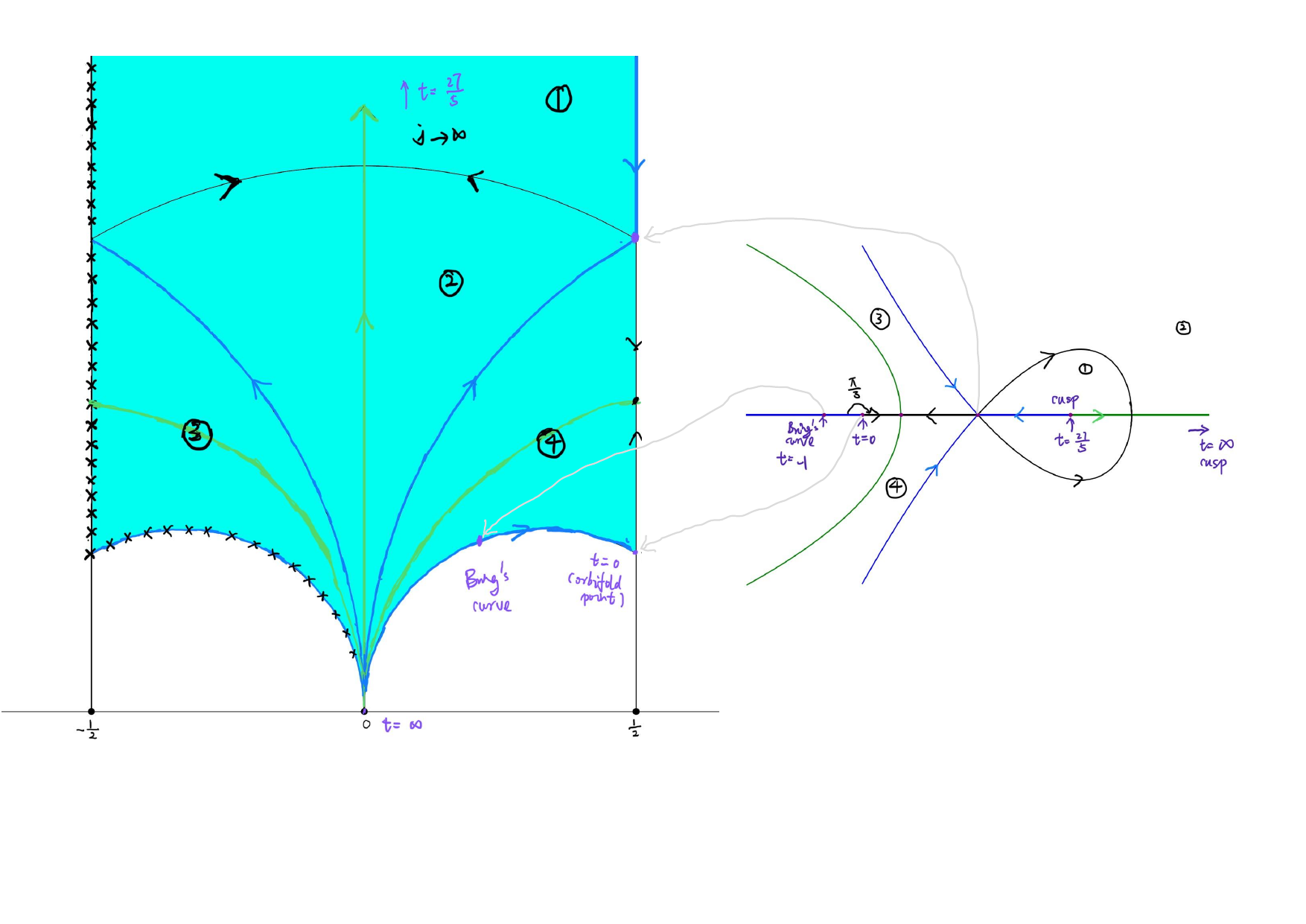}}
	\caption{\small{Left:\ Fundamental Domain of $\Gamma_1(3)$;\ Right:\ Picture of $\Bsr$.
			In the quotient the boundary is identified by reflection with respect to the imaginary axis.
			The cusp at $\infty$ has width $1$ and represents the irreducible curve with ten nodes, the cusp at $0$
			has width $3$ and  represents the sum of six lines and  $\frac{3+\sqrt{3}i}{6}$ represents the triple conic.}}
	\label{fuddGamma}
\end{figure*}

For what follows we need  a few facts regarding $\Gamma_1(3)$. Let us, as is customary, write $X_1(3)$ for  $\Gamma_1(3)\backslash  \Hsr$. This is a modular curve $X_1(3)$ which classifies  elliptic curves endowed with a point of order $3$. Then $X_1(3)$ is completed to a smooth projective curve $Y_1(3)$ by filling in a finite number cusps. These are by definition the orbits of $\Gamma_1(3)$ in $\Pds^1(\Qds)$
and in this case they consist of two elements and make $Y_1(3)$ isomorphic to $\Pds^1$ (one with cusp width $3$ and the other with cusp width $1$). The action of $\Gamma_1(3)$ on $\Hsr$ is free except for one orbit $\Osr$: that orbit consists of the points having a stabilizer of order $3$. This accounts for an orbifold point $o$ of $X_1(3)$ of order $3$.
\\

Let $\widetilde\Bsr^\circ\to \Bsr^\circ$ be the  $\Gamma$-cover. So given $t\in\Bsr^\circ$, then a point $\tilde t$ of $\widetilde\Bsr^\circ$ over $t$ can be represented by a path $\beta:[0,1]\to \Bsr^\circ$ with $\beta(0)$ on the real interval $(\frac{27}{5}, \infty)$ and $\beta(1)=t$. Such a path can be used to transport the basis $(U_\edge, U_\trc)$ of $\Hom_{\Zds\Isr}(V_o,H_1(\Sigma))$ to  a basis of  $\Hom_{\Zds\Isr}(V_o,H_1(C_t))$. It follows from the definitions that another such path
defines the same basis of $\Hom_{\Zds\Isr}(V_o,H^1(C_t))$ if and only if it defines the same point $\tilde t$ of $\widetilde\Bsr^\circ$ over $t$.
We therefore denote that basis  $(U_\edge(\tilde t), U_\trc(\tilde t))$.

The discussion in Subsection \ref{subsect:isotypical} shows  (in a much more general setting) that the symplectic form on
$H_1(C_t; \Qds)$  and
the inner product on $V_\Qds$ determine a symplectic form on the isogeny space $\Hom_{\Zds\Isr}(V_\Qds, H_1(C_t; \Qds))$.
Let us endow $V_0$ with the trivial Hodge structure of bidegree $(0,0)$. This is polarized by the $\Qds$-valued inner product that we described in Example \ref{inforVo}. Then the Hodge structure on $H^1(C_t)$ polarized by the intersection form then determines a Hodge structure on  $\Hom_{\Zds\Isr}(V_o,H^1(C_t))$ with as only nonzero Hodge numbers $h^{1,0}=h^{0,1}=1$  and polarized by the standard symplectic form. Its basis $(U_\edge(\tilde t), U_\trc(\tilde t))$  defines a point in the upper half plane $\Hsr$ as follows: the complex vector space $\Hom_{\Cds\Isr}(V,H^{1,0}(C)))= \Hom_{\Cds\Isr}(V,H^0(C, \Omega_C)))$ is of dimension one and if $\omega$ is a generator, then
this point is given by $\omega (U_\trc(\tilde t))/\omega(U_\edge(\tilde t))\in \Hsr$. We thus obtain a $\Gamma$-equivariant holomorphic map
\[
	\widetilde \Psr^\circ_V: \widetilde\Bsr^\circ\to \Hsr.
\]

Let $\Bsr^+$ be obtained from $\Bsr$ by filling in the point $0$ that represents the triple conic and $-1$ that represents the irreducible curve with 6 nodes, so that $\Bsr^+=\Pds^1-\{\frac{27}{5}, \infty\}$. Since the monodromy of the Winger pencil around $t=0$ has order $3$, the $\Gamma$-covering $\widetilde\Bsr^\circ\to \Bsr^\circ$ extends
to a $\Gamma$-covering $\widetilde\Bsr^+\to \Bsr^+$ that has ramification index $3$ over $0$. Then the map  $\widetilde \Psr^\circ_V$
extends to a $\Gamma$-equivariant map
\[
	\widetilde \Psr^+_V: \widetilde\Bsr^+\to \Hsr,
\]
which therefore induces  a holomorphic map
\[
	\Psr^+_V: \Bsr^+\to X_1(3).
\]
As is well-known, the fact that the period map has regular singularities implies that this map  lives in the algebraic category. Hence it extends (uniquely) to a  morphism between the completions of source and target:
\[
	\Psr_V: \Bsr\to Y_1(3).
\]

The main result of this section is the following, which can be understood as a strong Torelli theorem for our family. It is a reformulation
of Theorem \ref{mtm2}.

\begin{thm}\label{thm:main}
	The map $\Psr_V$ is an isomorphism which takes the point $t=0$  represented by the triple conic to the unique orbifold point of $X_1(3)$ of degree $3$, whereas the points $t=\infty$ and $t=\frac{27}{5}$ are mapped to the cusps of width $3$ and $1$ respectively.
\end{thm}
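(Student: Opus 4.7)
My plan is to show that $\Psr_V:\Bsr\to Y_1(3)$ has degree one, hence is an isomorphism of the underlying curves $\Bsr\cong\Pds^1$ and $Y_1(3)\cong\Pds^1$, and then read off the images of the distinguished fibers from the local monodromy data of Corollary \ref{cor:}. That $\Psr_V$ is a non-constant holomorphic map between compact Riemann surfaces is immediate, since a constant map would correspond to a point of $\Hsr$ fixed by all of $\Gamma_1(3)$, which is impossible. Thus $\Psr_V$ has a well-defined degree $d\geq 1$, and it suffices to prove $d=1$.

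I would compute $d$ by counting, with multiplicity, the preimages of the width-$3$ cusp $c_3$ of $Y_1(3)$. For each smooth fiber $t\in\Bsr^\circ$ the point $\Psr_V(t)$ lies in $X_1(3)$ and hence is not a cusp, so no smooth $t$ contributes. For the four singular values $t\in\{0,-1,27/5,\infty\}$ the image in $Y_1(3)$ is forced by the $\Gamma_1(3)$-conjugacy class of the local monodromy recorded in Corollary \ref{cor:}: trivial monodromy at $t=-1$ gives an ordinary point; the order-three monodromy at $t=0$ forces $\Psr_V(0)$ to be the unique orbifold point of $X_1(3)$ of order three; $\rho_\trc$ at $t=27/5$, conjugate in $\SL_2(\Zds)$ to $T^{-1}$, gives the width-$1$ cusp; and $\rho_\edge=T^{-3}$ at $t=\infty$ gives $c_3$. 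So $c_3$ has $t=\infty$ as its unique preimage.

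To conclude $d=1$ I must check that the ramification index of $\Psr_V$ at $\infty$ equals $1$. The stabilizer of $c_3$ in $\Gamma_1(3)$ is the infinite cyclic group generated by $T^{\pm 3}$, and $T^{-3}$ is already a generator of that group rather than a proper power. Therefore on the local uniformizers $q_3=e^{2\pi i\tau/3}$ at $c_3$ and $s=1/t$ at $t=\infty$, the pullback $\Psr_V^{\ast} q_3$ vanishes to order exactly $1$, so the ramification index is $1$. Combined with the uniqueness of the preimage this gives $d=1$, so $\Psr_V$ is an isomorphism; the remaining assertions about the images of $t=0$, $27/5$, $\infty$ asserted in the theorem are contained in the case analysis above.

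The main obstacle is the justification of the ramification-index-one statement at the cusp, which is not an interior Taylor expansion but a statement at a boundary point. It rests on the fact---already invoked when extending the period map algebraically to $\Bsr$---that the period map has regular singularities, so that in the chosen local uniformizers the pullback $\Psr_V^{\ast} q_3$ is a holomorphic function of $s$ whose order of vanishing equals the index of $T^{-3}$ inside the cyclic group it generates, namely $1$. Once that is granted, the rest is routine bookkeeping with the data assembled in Corollary \ref{cor:}.
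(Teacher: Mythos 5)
Your proof is correct, but it takes a genuinely different route from the paper's. The paper pins down $\deg\Psr_V=1$ via Riemann--Hurwitz: the two cusps each have a unique preimage (so full ramification there), the Euler-characteristic count then forbids any further ramification, and one concludes by showing that the orbifold point $o$ has the unramified fiber $\{0\}$. You instead count, with multiplicity, the preimage of the single width-$3$ cusp $c_3$: its only preimage is $t=\infty$, and the ramification index there is $1$ because $\rho_\edge$ is a generator of the parabolic stabilizer $\langle T^3\rangle$ of $c_3$ in $\Gamma_1(3)$, not a proper power of one. This sidesteps both Riemann--Hurwitz and the orbifold point entirely. The subtlety you rightly flag --- that the order of vanishing of $\Psr_V^{*}q_3$ at $s=1/t=0$ really is $1$ --- is the same ``regular singularities'' input both arguments already use to extend $\Psr_V$ over $\Bsr$; a clean way to phrase it is that $\Psr_V$ classifies the $V_o$-local system, so the pullback to a punctured disk at $\infty$ of the local system near $c_3$ (whose monodromy generates $\langle T^3\rangle$) must reproduce the Winger monodromy $\rho_\edge=T^{-3}$, forcing the $\pi_1$-degree, and hence the ramification index, to be $1$. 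Your identification of the images of the four singular values from the conjugacy classes of the local monodromies agrees with the paper. A side benefit of the paper's route is that it records, en passant, that $\Psr^+_V$ is a local isomorphism on all of $\Bsr^+$; yours is shorter and more local in flavor.
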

\begin{proof}
	The map $\Psr_V: \Bsr\to Y_1(3)$ is a nonconstant map between connected complete complex curves and hence it must be surjective and of finite degree. We must show that the degree (that we shall denote by $d$) is $1$.
	The $\Psr_V$-preimages of the two  cusps of $Y_1(3)$ must be $\{\frac{27}{5},\infty\}$ so that we will have a total ramification over each cusp. Both $\Bsr$ and $Y_1(3)$ are copies of $\Pds^1$ and so have euler characteristic $2$. The Riemann-Hurwitz formula then implies that there cannot be any further ramification, for these two ramification points already bring down the euler characteristic of the total space to
	the desired number: $2d-(d-1)-(d-1)=2$. This implies that $ \Psr^+_V$ is a local isomorphism.

	We claim that $\Psr_V^{-1}(o)=0$.  This will imply the theorem, for it  then  follows that $d=1$.

	It is clear that $\Psr_V(0)=o$.  On the other hand, the composite map $\widetilde\Bsr^\circ\xrightarrow{ \widetilde \Psr^+_V}\Hsr \to X_1(3)$ is equal to the composite map $\widetilde\Bsr^\circ\to \Bsr^\circ \to X_1(3)$
	and since both $\widetilde\Bsr^\circ\to \Bsr$ and $\Bsr^\circ \to X_1(3)$ are local isomorphisms, so is $\widetilde\Bsr^\circ\to \Hsr \to X_1(3)$. As $\Hsr \to X_1(3)$  ramifies over $o$, it follows  that this composite must land in
	$X_1(3)-\{ o\}$. Hence $\Psr^+_V(\Bsr^\circ)\subset X_1(3)-\{ o\}$.
\end{proof}

\begin{rmk}\label{rem:Eo}
	The preceding theorem does not follow from the usual Torelli theorem since it only concerns the $V_o$-part of the cohomology.
	There is a similar period map for the $E_o$-part of the monodromy. The corresponding monodromy group will land in an arithmetic subgroup of a group isomorphic to $\SL(2,\Qds(\sqrt{5}))$ and the period map will land in a Hilbert modular surface associated to this group. If we combine this with the  above theorem, we then find a map from $X_1(3)$ to this Hilbert modular surface. Note that the point representing Bring's curve is mapped to a cusp point of the Hilbert modular surface and so this cannot be a morphism of  Shimura varieties.
\end{rmk}

\bibliographystyle{spmpsci}
\bibliography{./Reference}

\begin{thebibliography}{1}
\providecommand{\url}[1]{{#1}}
\providecommand{\urlprefix}{URL }
\expandafter\ifx\csname urlstyle\endcsname\relax
  \providecommand{\doi}[1]{DOI~\discretionary{}{}{}#1}\else
  \providecommand{\doi}{DOI~\discretionary{}{}{}\begingroup
  \urlstyle{rm}\Url}\fi

\bibitem{braden2012bring}
Braden, H.W., Northover, T.P.: Bring's curve: its period matrix and the vector
  of {Riemann} constants.
\newblock SIGMA. Symmetry, Integrability and Geometry: Methods and Applications
  \textbf{8}, 065 (2012)

\bibitem{dye1995plane}
Dye, R.: A plane sextic curve of genus 4 with a 5 for collineation group.
\newblock Journal of the London Mathematical Society \textbf{52}(1), 97--110
  (1995)

\bibitem{gonzalez2000families}
Gonz{\'a}lez-Aguilera, V., Rodr{\'\i}guez, R.E.: Families of irreducible
  principally polarized abelian varieties isomorphic to a product of elliptic
  curves.
\newblock Proceedings of the American Mathematical Society pp. 629--636 (2000)

\bibitem{riera1992period}
Gonzalo, R., Rodr{\'\i}guez, R.E.: The period matrix of {Bring's} curve.
\newblock Pacific Journal of Mathematics \textbf{154}(1), 179--200 (1992)

\bibitem{isaacs1994character}
Isaacs, I.M.: Character Theory of Finite Groups, vol.~69.
\newblock Courier Corporation (1994)

\bibitem{Cheltsov2015Cremona}
Shramov, C., Cheltsov, I.: Cremona Groups and Icosahedron (2015)

\bibitem{winger1925invariants}
Winger, R.: On the invariants of the ternary icosahedral group.
\newblock Mathematische Annalen \textbf{93}(1), 210--216 (1925)

\bibitem{zi2021geometry}
Yunpeng, Z.: Geometry of the {Winger} pencil.
\newblock European Journal of Mathematics pp. 1074--1101 (2021)

\end{thebibliography}

\end{document}